\vfuzz2pt % Don't report over-full v-boxes if over-edge is small
\hfuzz2pt % Don't report over-full h-boxes if over-edge is small
% THEOREMS -------------------------------------------------------

%%------------------------------------------------------------

\documentclass[11pt,a4paper,twoside]{article}

\setlength{\oddsidemargin}{0cm} \setlength{\evensidemargin}{0cm}
\setlength{\textwidth}{16truecm} \setlength{\textheight}{25truecm}
% ----------------------------------------------------------------
\vfuzz2pt % Don't report over-full v-boxes if over-edge is small
\hfuzz2pt % Don't report over-full h-boxes if over-edge is small
% THEOREMS -------------------------------------------------------

%%------------------------------------------------------------
%\usepackage{latexcad}
%\usepackage{graphpap}
\usepackage{amsfonts}
\usepackage{mathrsfs}
\usepackage{amsthm}

\usepackage{amsmath}%
\usepackage{amssymb}%
\usepackage{fancyhdr}%
\usepackage{ifthen}%
\usepackage{calc}%
\usepackage{lastpage}%
\usepackage{array}%
\usepackage{anysize}%
\usepackage{cases}

\makeatletter
\renewcommand\@biblabel[1]{${#1}.$}
\makeatother

\topmargin -40pt \oddsidemargin 0pt \evensidemargin 0pt \textheight
25 true cm \textwidth 16 true cm

%%--------------------------------------------------------------------

\title{\bf A  Probabilistic Approach for  Gradient Estimates on Time-Inhomogeneous Manifolds }
\markboth { }{Li-Yau Type Gradient Inequality }
\author{{\bf Li-Juan Cheng \footnote{\scriptsize School of Mathematical Sciences,  Beijing Normal
University,  Laboratory of Mathematics and Complex
Systems,  Ministry of Education,  Beijing 100875,
The People's Republic of China.  E-mail: chenglj@mail.bnu.edu.cn(L.J. Cheng)
}}}
\date{June 12, 2013}

%%%%%%%%%%%%%%%%%%%%%%%%%%%%%%%%
\newtheorem{theorem}{Theorem}[section]

\newtheorem{lemma}[theorem]{Lemma}

\newtheorem{remark}[theorem]{Remark}
\newtheorem{example}[theorem]{Example}

\numberwithin{equation}{section} \catcode`@=11
%\@addtoreset{equation}{section} \catcode`@=12

%%%%%%%%%%%%%%%

\begin{document}
\maketitle
\noindent{\bf Abstract}\ \
Gradient inequalities of the Hamilton type and the Li-Yau  type for positive solutions to the heat equation are established from a probabilistic viewpoint, which simplifies the proofs of some results of  Sun [{\it Pacific J. Math.}, 253 (2011), pp. 489--510].
\vskip12pt

 \noindent {\bf Keywords}: \ \ {Geometric flow, heat equation, Li-Yau type inequality, Hamilton type gradient inequality,  $g_t$-Brownian motion}

\noindent {\bf MSC(2010)}:  \ \ {58J65, 60J60}

%\end{abstract}

\def\dint{\displaystyle\int}
\def\ct{\cite}
\def\lb{\label}
\def\ex{Example}
\def\vd{\mathrm{d}}
\def\dis{\displaystyle}
\def\fin{\hfill$\square$}
\def\thm{theorem}
\def\bthm{\begin{theorem}}
\def\ethm{\end{theorem}}
\def\blem{\begin{lemma}}
\def\elem{\end{lemma}}
\def\brem{\begin{remark}}
\def\erem{\end{remark}}
\def\bexm{\begin{example}}
\def\eexm{\end{example}}
\def\bcor{\bg{corollary}}
\def\ecor{\end{corollary}}
\def\r{\right}
\def\l{\left}
\def\var{\text {\rm Var}}
\def\lmd{\lambda}
\def\alp{\alpha}
\def\gm{\gamma}
\def\Gm{\Gamma}
\def\e{\operatorname{e}}
\def\gap{\text{\rm gap}}
\def\dsum{\displaystyle\sum}
\def\dsup{\displaystyle\sup}
\def\dlim{\displaystyle\lim}
\def\dlimsup{\displaystyle\limsup}
\def\dmax{\displaystyle\max}
\def\dmin{\displaystyle\min}
\def\dinf{\displaystyle\inf}
\def\be{\begin{equation}}
\def\de{\end{equation}}
\def\dint{\displaystyle\int}
\def\dfrac{\displaystyle\frac}
\def\zm{\noindent{\bf  Proof.\ }}
\def\endzm{\quad $\Box$}
\def\mO{\mathcal{O}}
\def\mW{\mathcal{W}}
\def\mL{\mathcal{L}}
\def\LC{\mathcal{L}{\rm Cut}}
\def\proclaim#1{\bigskip\noindent{\bf #1}\bgroup\it\  }
\def\endproclaim{\egroup\par\bigskip}
\baselineskip 18pt

\section{Introduction}
\hspace{0.5cm} In this paper, we want to establish   gradient estimates on time-inhomogeneous manifolds
by stochastic analysis.  It is well known that the gradient estimates, known as the differential Harnack
inequalities, are powerful tools both on geometry and stochastic analysis. For example, R. Hamilton \cite{Ha95}
established differential Harnack inequalities for the scalar curvature under the Ricci flow, which is applied to the singularity analysis; Perelman \cite{Pe02} successfully used a differential Harnack inequalities to consider the Poincar\'{e} conjecture.

   Under some curvature constraints, Sun \cite{Sun11} gave gradient inequalities for positive solutions to  the heat equation under general geometric flows. Meanwhile,  Bailesteanu-Cao-Pulemotov \cite{BCP}, Liu \cite{Liu09} independently consider these similar problems under the Ricci flow. As announced, we want to review these inequalities from a probabilistic viewpoint. When the metric is independent of  $t$, this point of view has been
worked well for local estimates  in  positive harmonic function \cite{ADT}  and  for Li-Yau type gradient estimates \cite{AT} on Riemannian manifolds.

 Let $M$ be a $d$-dimensional  differential manifold without boundary  equipped with  a family of complete
Riemannian metrics $(g_t)_{t\in [0,T]}$, $T\in (0,\infty)$, which is $C^1$ in $t$.
% Let $\nabla , \Delta $ be the Levi-Civita connection and the Laplace-Beltrami operator associated with the metric $g_t$ respectively.
For simplicity, we take the notation: for $X, Y\in TM$,
\begin{align*}
    &\mathcal{R}_t(X,Y):={\rm Ric}_t(X,Y)+\partial_tg_t(X,Y),
\end{align*}
 where ${\rm Ric}_t$ is the Ricci curvature tensor with respect to the metric $g_t$.
Suppose a smooth positive function $u:M\times [0,T]\rightarrow \mathbb{R}$ satisfies the heat equation
\begin{align}\label{heat-equ}
\frac{\partial}{\partial t}u(x,\cdot)(t)=\frac{1}{2}\Delta u(\cdot,t)(x)
\end{align}
on $M\times [0,T]$. This paper is devoted to the Hamilton type and Li-Yau type gradient estimates for  $u$ by
constructing  some suitable semimartingales.  To explain  the main idea, we take the Li-Yau type gradient estimate on the compact manifold carrying the Ricci flow for example. %In this case,
  %First, suppose that there exist some positive constants $k_1,k_2,k_3$  such that
  Here and in what follows, the Ricci flow will mean (probabilistic convention):
\begin{align}\label{Ricci-flow}
\frac{\partial}{\partial t}g(x,t)=-{\rm Ric}(x,t), \ \  (x,t)\in M\times [0,T].
\end{align}
In this case, ${\mathcal R}_t=0$.  Moreover, suppose that there exists some constant $k>0$ such that
\begin{align*}%\label{condition-0}
 0\leq {\rm Ric}_t\leq k
\end{align*}
holds on $M\times [0,T]$.
 Let $X_t^T$ be a $g_{(T-t)}$-Brownian motion  (see \cite{ACT} for the construction), which is ensured to be non-explosive under the Ricci flow  (see \cite{Ku}).   Let
\begin{align}\label{s-t}
\hat{S}_{t}&=h_t\l(\frac{|\nabla  u|^2}{u}(X^T_{T-t},t)-\Delta u_t\r)-n u_t\dot{h}_t,
\ \alpha>1,
\end{align}
where $u_t:=u(X^T_{T-t},t)$  and $h_t$ is the solution of $$\dot{h}_t=h_t(c_1t^{-1}+c_2k),\ \ h_T=1$$
for some positive constant $c_1, c_2$.
%Here and in the sequels,  we use the notations $|\cdot|$ for the norm  with respect to the metric $g_t$ for simplicity. It is clear that it all depends on $t$.
Then,
\begin{align}\label{mart-1}
\hat{S}_{t}=h_t\l\{\frac{|\nabla  u|^2}{u}(X^T_{T-t},t)- \Delta u(\cdot, t)(X_{T-t}^T)-nu_t(c_1t^{-1}+c_2k)\r\}.
\end{align}
Now,  the Li-Yau type gradient estimate can be derived  if we can choose some suitable constants $c_1, c_2$ in \eqref{mart-1} such that
$\hat{S}_{t}$ is a supermartingale,  see the proof of Lemma \ref{local-li-Yau} below.
 %Before moving on, let us give some comments on this construction. \vspace{-0.2cm}
%\begin{enumerate}
 % \item [(1)]
   When it reduces to the constant metric case, the classical  Li-Yau inequality says that
$\frac{|\nabla u|^2}{u^2}-\frac{\Delta u}{u}$ can be dominated by $\frac{n}{t}$ and it does not need ${\rm Ric}_t\leq k$. However, in our setting, when using the It\^{o} formula, we need  this curvature condition to deal with some additional terms from the time derivative  of the metric, e.g. the derivative of  $\Delta u$ about $t$. It is  the main difficult
for us to overcome.

%\vspace{-0.2cm}
 % \item [(2)] In \cite{AT}, the local Li-Yau type gradient estimate for constant metric case was established for  $\alpha\in(1,2)$ and it can not cover the result on compact case.
 %  Here, we add  an additional parameter $\beta$ is to ensure that $\alpha$
 % can be chosen from the interval  $(1,\infty)$. \vspace{-0.2cm}
  %\item [(3)] Compared with \cite{Sun11}, we weaken the assumption by removing the bound on the gradient of
  %$\partial_tg_t$.\vspace{-0.2cm} %about the space variable.
  %When $\mathcal{R}_t\equiv 0$, then $(g_t)_{t\in [0,T]}$ is a solution of  a Ricci flow. Here and in what follows, the Ricci flow will mean (probabilistic convention):
%\begin{align}\label{Ricci-flow}
%\frac{\partial}{\partial t}g_t=-{\rm Ric}_t, \ \  (x,t)\in M\times [0,T].
%\end{align}
%Then the conclusion is applied to the Ricci flow on compact manifolds by taking  $k_1=k_3=0$, $k_2=k$.
%\end{enumerate}

%In this paper, we look at the Ricci flow \eqref{Ricci-flow} combined with the heat equation \eqref{heat-equ}.
%The study of the system \eqref{Ricci-flow}-\eqref{heat-equ} arose from R. Hamilton , which was further
%pursued e.g. in \cite{Guenther02}. A large amount of work was done to understand several problems that are similar to  \eqref{Ricci-flow}
%-\eqref{heat-equ} in a way or another, see e.g. \cite{Zhang06, Cao08,CH09,BCP} and the reference within.

 The rest parts of the paper are organized as follows. In Sections 2,
we  study the Hamilton type  gradient estimates. In Section 3, we first prove  the Li-Yau type gradient inequality,
 and then discuss the local Li-Yau type gradient inequality with lower order term. In Section 4, we give
 the application to the Ricci flow.

 For readers' convenience,   we will take the same notations as in \cite{Sun11}. We emphasize that the Laplacian $\Delta$,
 the gradient $\nabla$, and the norm
$|\cdot|$ appearing above depend on the parameter $t\in [0,T]$.  The inner product $\l<\cdot,\cdot\r>$, the normal vector field $\frac{\partial}{\partial \nu}$ will be used in the following content also depend on $t$.

\section{Gradient estimates of Hamilton type}
\hspace{0.5cm}In this section, we explain how submartingales, related to positive solutions to the heat equation, can imply the Hamilton type
gradient estimates, i.e. the space-only gradient estimates.
For any subset $D\subset M\times [0,T]$ and $f$ defined on $M\times [0,T]$, $\|f\|_{D}:=\sup_{(x,t)\in D}|f|$.
The Hamilton type inequality on compact manifolds is presented as follows.
 \begin{theorem}[Gradient inequality of Hamilton type]\label{Hamilton-1}
Let $M$ be a compact manifold such that ${\mathcal R}_t \geq -k(t)$
for some $k\in C([0,T])$. Suppose that $u$ is a positive solution  to the
heat equation \eqref{heat-equ} on $M\times [0,T]$.  Then for $t\in (0,T]$, we have
\begin{align*}%\label{Hamilton-gradient}
   \frac{|\nabla u|^2}{u^2}(x,t)\leq\frac{ 2}{\int_{0}^te^{-\int_{s}^tk(r)\vd r}\vd s}\log \frac{\|u\|_{M\times [0,T]}}{u}.
\end{align*}
\end{theorem}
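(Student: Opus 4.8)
The plan is to fix $(x,t)$ with $t\in(0,T]$, write $A:=\|u\|_{M\times[0,T]}$ and $\beta(s):=\int_0^s e^{-\int_r^s k(\rho)\,\vd\rho}\,\vd r$, so that $\beta(0)=0$ and $\dot\beta=1-k\beta$. Clearing denominators, the asserted inequality at $(x,t)$ is equivalent to the pointwise bound $\Psi(x,t)\le 0$ for the space--time functional
\[
\Psi:=\beta\,\frac{|\nabla u|^2}{u}-2u\log\frac{A}{u}.
\]
I would establish $\Psi\le0$ probabilistically: run a $g_s$-Brownian motion $Z_r$ issued from $x$ and reversed in the physical time, i.e.\ with time-dependent generator $\tfrac12\Delta_{g_{t-r}}$ for $r\in[0,t]$ (non-explosive since $M$ is compact), and study $N_r:=\Psi(Z_r,t-r)$. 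By the It\^o formula for a $g_s$-Brownian motion the drift of $N_r$ equals $-(\partial_s-\tfrac12\Delta)\Psi$ evaluated at $(Z_r,t-r)$, so the whole proof reduces to checking the differential inequality $(\partial_s-\tfrac12\Delta)\Psi\le0$, after which $N_r$ is a submartingale.

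To verify this I would compute three heat-operator identities. The Bochner formula together with $\partial_s u=\tfrac12\Delta u$ gives
\[
\Big(\partial_s-\tfrac12\Delta\Big)|\nabla u|^2=-|\mathrm{Hess}\,u|^2-\mathcal R_s(\nabla u,\nabla u),
\]
in which the metric time-derivative $\partial_s g_s$ combines with $\mathrm{Ric}_s$ into exactly the tensor $\mathcal R_s$; the hypothesis $\mathcal R_s\ge -k$ then yields $-\mathcal R_s(\nabla u,\nabla u)\le k|\nabla u|^2$. A direct calculation gives $(\partial_s-\tfrac12\Delta)\big(u\log\tfrac{A}{u}\big)=\tfrac12|\nabla u|^2/u$, and the quotient rule gives $(\partial_s-\tfrac12\Delta)(w/u)=u^{-1}(\partial_s-\tfrac12\Delta)w+u^{-2}\langle\nabla w,\nabla u\rangle-u^{-3}w^2$ with $w:=|\nabla u|^2$.

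Assembling these and using $\dot\beta-1=-k\beta$, the purely first-order gradient contributions $\dot\beta\,w/u$ and $-w/u$ combine to $-k\beta\,w/u$, which is precisely cancelled (as an upper bound) by the curvature term $\beta\cdot\frac{-\mathcal R_s(\nabla u,\nabla u)}{u}\le k\beta\,w/u$. What survives is $\beta u^{-1}$ times
\[
-|\mathrm{Hess}\,u|^2+\frac{2}{u}\,\mathrm{Hess}\,u(\nabla u,\nabla u)-\frac{w^2}{u^2}.
\]
The \emph{main obstacle}, and the single place where the full Hessian (rather than just $\Delta u$) must be retained, is to see this bracket is nonpositive: bounding $\mathrm{Hess}\,u(\nabla u,\nabla u)\le|\mathrm{Hess}\,u|\,|\nabla u|^2$ turns it into $-\big(|\mathrm{Hess}\,u|-w/u\big)^2\le0$. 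Hence $(\partial_s-\tfrac12\Delta)\Psi\le0$.

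It remains to read off the estimate. Since $M$ is compact and $u$ is smooth and strictly positive, all coefficients are bounded on $M\times[0,t]$, so the local martingale part is genuine and $N_r$ is a true submartingale. At the endpoint $N_t=\Psi(Z_t,0)=-2u\log(A/u)\le0$ because $\beta(0)=0$ and $u\le A$, while $N_0=\Psi(x,t)$ is deterministic; the submartingale property $N_0\le\mathbb E[N_t]$ then forces $\Psi(x,t)\le0$, which is exactly the claimed inequality.
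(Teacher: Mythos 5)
Your proof is correct and takes essentially the same route as the paper: your functional $\Psi=\beta\,\frac{|\nabla u|^2}{u}-2u\log\frac{A}{u}$ is, up to a factor of $2$, a sign convention, and the caloric term $u\log A$, exactly the paper's process $H_t=h(t)\frac{|\nabla u|^2}{u}+u\log u$ with $h=\beta/2$, and your differential inequality $\bigl(\partial_s-\frac{1}{2}\Delta\bigr)\Psi\leq 0$ is precisely the supermartingale property of Lemma \ref{lem2}, obtained from the same identities as Lemma \ref{basic-formula} and the bound \eqref{basic-ineq}. The only cosmetic differences are that you absorb the constant $A=\|u\|_{M\times[0,T]}$ into the functional instead of normalizing $f$ at the end, and that you actually verify the pointwise heat-operator identities (which the paper states without proof) via Bochner's formula and the quotient rule, closing the square by Cauchy--Schwarz rather than recognizing the exact square $\bigl|\nabla^2u-\frac{\nabla u\otimes\nabla u}{u}\bigr|^2$.
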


 The following two lemmas is essential to the proof of Theorem \ref{Hamilton-1}.  First,
  let us introduce basic formulas for solutions to the heat equation.
  \begin{lemma}\label{basic-formula}
Let $u=u(x,t)$ be a positive solution to \eqref{heat-equ}
on $M\times [0,T]$. Then the following equations hold:
\begin{align}
& \l(\frac{1}{2}\Delta -\partial_t\r)(u\log u)=\frac{1}{2}\frac{|\nabla u|^2}{u};\nonumber\\
& \l(\frac{1}{2}\Delta -\partial_t\r)\frac{|\nabla  u|^2}{u}=\frac{1}{u}\l|\nabla^2u-\frac{\nabla u\otimes \nabla  u}{u}\r|^2+\frac{{\mathcal{R}}_t(\nabla  u,\nabla  u)}{u}.\label{eq3}
\end{align}
\end{lemma}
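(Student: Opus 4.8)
The plan is to verify both identities by a direct pointwise computation at a fixed time $t$, carrying out all spatial operations with the metric $g_t$ and using the heat equation \eqref{heat-equ} to cancel the first-order terms. Write $L:=\frac12\Delta-\partial_t$; since $\frac12\Delta$ contributes the cross term $\l<\nabla f,\nabla g\r>$ while $\partial_t$ acts as an ordinary derivation on the product $fg$, one has the product rule $L(fg)=(Lf)g+f(Lg)+\l<\nabla f,\nabla g\r>$, which organizes the whole calculation.

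For the first identity I would set $f=u\log u$, so that $\nabla f=(\log u+1)\nabla u$ and hence $\Delta f=(\log u+1)\Delta u+\frac{|\nabla u|^2}{u}$, while $\partial_t f=(\log u+1)\partial_t u$. Subtracting, the factor $(\log u+1)$ multiplies exactly $\l(\frac12\Delta-\partial_t\r)u$, which vanishes by \eqref{heat-equ}, and the only surviving term is $\frac12\frac{|\nabla u|^2}{u}$.

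For the second identity, set $P:=|\nabla u|^2$ and apply the product rule to $\frac Pu=P\cdot\frac1u$. Using \eqref{heat-equ} one finds $L\frac1u=\frac{P}{u^3}$ and $\l<\nabla P,\nabla\frac1u\r>=-u^{-2}\l<\nabla P,\nabla u\r>$, so the problem reduces to computing $LP$. The Bochner formula at the fixed time $t$ gives $\frac12\Delta P=|\nabla^2u|^2+\l<\nabla u,\nabla\Delta u\r>+{\rm Ric}_t(\nabla u,\nabla u)$, while differentiating $P=g_t^{ij}\partial_iu\,\partial_ju$ in $t$, with $\partial_tg_t^{ij}=-g_t^{ik}g_t^{jl}\partial_tg_{kl}$ and \eqref{heat-equ}, gives $\partial_tP=-\partial_tg_t(\nabla u,\nabla u)+\l<\nabla\Delta u,\nabla u\r>$. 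The terms $\l<\nabla u,\nabla\Delta u\r>$ cancel and the two curvature contributions combine, so that $LP=|\nabla^2u|^2+\mathcal{R}_t(\nabla u,\nabla u)$ with $\mathcal R_t={\rm Ric}_t+\partial_tg_t$.

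Collecting the pieces yields $L\frac Pu=\frac{|\nabla^2u|^2}{u}+\frac{\mathcal R_t(\nabla u,\nabla u)}{u}+\frac{P^2}{u^3}-\frac{\l<\nabla P,\nabla u\r>}{u^2}$. To recognize this as the claimed right-hand side I would expand $\l|\nabla^2u-\frac{\nabla u\otimes\nabla u}{u}\r|^2=|\nabla^2u|^2-\frac2u\,\nabla^2u(\nabla u,\nabla u)+\frac{P^2}{u^2}$ and use the standard identity $\l<\nabla P,\nabla u\r>=2\,\nabla^2u(\nabla u,\nabla u)$; dividing by $u$ then matches the two expressions exactly. The only step where the time-inhomogeneity genuinely enters — and the one I would treat most carefully — is the computation of $\partial_tP$: one must not forget that the inverse metric $g_t^{ij}$ itself depends on $t$, and it is precisely this dependence that produces the term $\partial_tg_t(\nabla u,\nabla u)$ and hence makes $\mathcal R_t$, rather than ${\rm Ric}_t$ alone, appear in \eqref{eq3}.
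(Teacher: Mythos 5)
Your proposal is correct: the product rule for $L=\frac12\Delta-\partial_t$, the Bochner formula at fixed $t$, and the careful differentiation of $P=g_t^{ij}\partial_iu\,\partial_ju$ in $t$ (where the $t$-dependence of $g_t^{ij}$ produces the $-\partial_tg_t(\nabla u,\nabla u)$ term and hence $\mathcal{R}_t$) all check out, and the final matching via $\l<\nabla P,\nabla u\r>=2\nabla^2u(\nabla u,\nabla u)$ is exact. This is precisely the direct verification the paper invokes (it states the identities "can be checked directly" and omits all details), so your argument fills in the intended proof rather than taking a different route.
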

The two equalities in Lemma \ref{basic-formula}, which can be checked directly, imply some inequalities frequently used in the sequel and crucial for our approach. Let $f=\log u$. We  see that
\begin{equation}\label{eq4}
\frac{1}{u}\l|\nabla^2 u-\frac{\nabla u\otimes \nabla  u}{u}\r|^2=u\l|\nabla^2 f\r|^2\geq \frac{u}{n}(\Delta f)^2=\frac{1}{nu}\l(\Delta u-\frac{|\nabla u|^2}{u}\r)^2.
\end{equation}
Then, if $\mathcal{R}_t\geq -k(t)$ on $M$ for some $k\in C([0,T])$,  we have
      \begin{align}
         \l(\frac{1}{2}\Delta -\partial_t\r)\frac{|\nabla  u|^2}{u}&\geq \frac{1}{nu}\l(\Delta  u-\frac{|\nabla  u|^2}{u}\r)^2+\frac{{\mathcal{R}}_t(\nabla u,\nabla u)}{u}\nonumber\\
        %&\geq \frac{1}{nu}\l(\Delta  u-\frac{|\nabla  u|^2}{u}\r)^2-k(t)\frac{|\nabla  u|^2}{u}\nonumber\\
        & \geq  -k(t)\frac{|\nabla  u|^2}{u}.\label{basic-ineq}
      \end{align}

Next, we need to construct a (sub)martingale as in \cite[Lemma 2.4]{AT}.
Recall that $(X_t^T)$ is a $g_{(T-t)}$-Brownian motion on $M$. Let $\{P_{s,t}\}_{0\leq s\leq t\leq T}$ be the associated semigroup. It is easy to see that $P_{T-t,T}f$ is a solution to the equation \eqref{heat-equ}.
\begin{lemma}\label{lem2}
Let
$u(x,t)=P_{T-t,T}f(x)$ be a positive solution to the heat equation \eqref{heat-equ} on $M\times [0,T]$. If
$\mathcal{R}_t\geq -k(t)$ for some $k\in C([0,T])$, then for any $g_{(T-t)}$-Brownian motion $(X_t^T)$ on $M$, the process
\begin{align}\label{Nt}
    H_t:=h(t)\frac{|\nabla P_{T-t,T}f|^2}{P_{T-t,T}f}(X^T_{T-t})+(P_{T-t,T}f\log P_{T-t,T}f)(X_{T-t}^T)
\end{align}
with $h(t)={\frac{1}{2}\int_{0}^te^{-\int_{s}^tk(r)\vd r}\vd s}$
is a local supermartingale $($up to lifetime$)$.
\end{lemma}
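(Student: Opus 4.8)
The plan is to treat the statement as a direct application of It\^o's formula, since Lemma \ref{basic-formula} has already isolated the effect of the evolving metric. Set $u=P_{T-t,T}f$, a positive solution of \eqref{heat-equ}, and abbreviate $G=|\nabla u|^2/u$ and $\Phi=u\log u$, so that $H_t=h(t)\,G(X^T_{T-t},t)+\Phi(X^T_{T-t},t)$. It is cleanest to parametrise by the Brownian clock $\rho=T-t$: then $X^T_\rho$ runs forward, its spatial quadratic variation is matched to $\tfrac12\Delta_{g_{(T-\rho)}}$, and because the clock and the PDE time always sum to $T$, the finite-variation part of $dF(X^T_\rho,T-\rho)$ for any space--time $F$ is governed by the operator $L:=\tfrac12\Delta-\partial_t$ --- precisely the operator standing on the left of the two identities in Lemma \ref{basic-formula}.

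First I would apply It\^o's formula to each summand of \eqref{Nt}. The gradient terms assemble into a genuine local martingale, which I set aside. The product rule applied to $h(T-\rho)\,G$ contributes the explicit term $-\dot h(T-\rho)\,G$ (the sign coming from $\tfrac{d}{d\rho}h(T-\rho)$), while the intrinsic drifts of $G$ and $\Phi$ along the process are $LG$ and $L\Phi$. Hence the drift coefficient of the clock-forward process equals $-\dot h(T-\rho)\,G+h(T-\rho)\,(LG)+L\Phi$. By the first identity of Lemma \ref{basic-formula} one has $L\Phi=\tfrac12 G$, and by \eqref{basic-ineq} one has $LG\ge -k(t)\,G$ under the hypothesis $\mathcal{R}_t\ge -k(t)$; these are the only two analytic inputs.

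Substituting and regrouping, the drift coefficient becomes $\big(\tfrac12-\dot h(T-\rho)\big)G+h(T-\rho)\,(LG)$, so the choice of $h$ is dictated by requiring $\tfrac12-\dot h=k\,h$, i.e. the linear ODE $\dot h+k\,h=\tfrac12$ with $h(0)=0$; a one-line variation-of-constants check confirms it is solved by $h(t)=\tfrac12\int_0^t e^{-\int_s^t k(r)\,\vd r}\,\vd s$. With this $h$ the coefficient collapses to $h(T-\rho)\big[(LG)+k(t)\,G\big]\ge 0$ by \eqref{basic-ineq}, so the clock-forward process is a local submartingale, which is exactly the asserted local supermartingale property in the original variable $t$. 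I expect the main obstacle to be the It\^o/generator bookkeeping: correctly handling the $t$-dependence of $\Delta$, $\nabla$ and $|\cdot|$ (the ``extra terms from $\partial_t g_t$'' flagged in the introduction) and the time reversal between $t$ and $\rho$. The first of these is, however, precisely what Lemma \ref{basic-formula} packages into $\mathcal{R}_t$, so once the drift operator is identified as $L$ only the sign bookkeeping and the ODE for $h$ remain. Non-explosion of $X^T$ on the compact $M$ renders the localisation harmless, and ``up to lifetime'' is needed only to accommodate the stochastic-integral terms.
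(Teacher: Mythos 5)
Your proof is correct and is essentially the paper's own argument: the paper disposes of this lemma with the single line that it ``can be checked directly by the It\^{o} formula,'' and your computation is exactly that check, carried out in the forward Brownian clock $\rho=T-t$. Your bookkeeping --- the drift operator $L=\tfrac12\Delta-\partial_t$, the two analytic inputs from Lemma \ref{basic-formula} and \eqref{basic-ineq}, and the ODE $\dot h+k h=\tfrac12$, $h(0)=0$, characterizing the stated $h$ --- is all accurate, so the forward-clock local submartingale property you obtain is precisely the asserted local supermartingale property in the variable $t$.
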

\begin{proof}
It can be checked directly by the It\^{o} formula,  we omit the details.
\end{proof}
\begin{proof}[{\bf Proof of Theorem \ref{Hamilton-1} }]
Since $M$ is compact, by Lemma \ref{lem2}, the local submartingale $H_t$ defined in \eqref{Nt} is a true supermartingale. Then,  we have, for $t\in (0,T]$,
$$\mathbb{E}(H_t|X_{T-t}^T=x)\leq \mathbb{E}(H_0|X_{T-t}^T=x).$$
That is
$$\l|\frac{\nabla  u}{u}\r|^2(x,t)\leq \frac{ 2}{\int_{0}^te^{-\int_{s}^tk(r)\vd r}\vd s}P_{T-t,T}\l(\frac{f}{P_{T-t,T}f}\log \frac{f}{P_{T-t,T}f}\r)(x).$$
By normalizing $f$ as $f^*=f/P_{T-t,T}f$, we complete the proof.
\end{proof}

Let $\rho_t(x,y)$ be the distance between $x\in M$ and $y\in M$ with respect to the metric $g_t$. Fix $x_0\in M$ and $\rho>0$. The notation $B_{\rho,T}$ stands for the set $\{(x,t)\in M\times [0,T]\ |\ \rho_t(x,x_0)<\rho\}$.
Our next step is to localize the arguments to cover the solution to the heat equation on $B_{\rho,T}$.
\begin{theorem}[Local gradient inequality of Hamilton type]\label{cor1}
Assume that there exist some nonnegative constants $k_1,k_2$ such that
 \begin{align}\label{condition-1}
 {\rm Ric}_t\geq -k_1,\ \ \  \partial_tg_t\geq - k_2
 \end{align}
  holds   on $B_{\rho,T}$. Let  $u$ be a solution to the heat equation on $B_{\rho,T}$, which is positive and
continuous on $\overline{B_{\rho,T}}$. Then, for each $(x,t)\in B_{\rho/2,T}$ and $ t\neq 0$, there holds
\begin{align*}%\label{ineq-Li-Yau}
    \l|\frac{\nabla u}{u}\r|^2(x,t)\leq 2\l[\frac{1}{t}+\frac{4\pi^2(n+7)}{(4-\pi)^2\rho^2}+\frac{(\pi^2+16)(k_1+k_2)}{(4-\pi)^2}\r]\l(4+\log \frac{\|u\|_{\overline{B_{\rho,T}}}}{u}\r)^2.
\end{align*}
\end{theorem}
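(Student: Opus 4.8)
The plan is to localise the global estimate of Theorem~\ref{Hamilton-1} in the spirit of Souplet--Zhang, working throughout with the operator $\tfrac12\Delta-\partial_t$ that already governs Lemma~\ref{basic-formula} (this is the deterministic shadow of the generator used for the supermartingale $H_t$; one may equally run the argument with a local supermartingale stopped at the exit time from $B_{\rho,T}$). Both sides of the claimed inequality are invariant under $u\mapsto cu$, so I would first normalise $\|u\|_{\overline{B_{\rho,T}}}=1$ and set $f=\log u\le 0$, so that $\log(\|u\|/u)=-f$ and the goal becomes the pointwise bound $|\nabla f|^2\le 2[\,\cdots\,](4-f)^2$ on $B_{\rho/2,T}$. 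The natural quantity is therefore the nonlinear
\begin{align*}
w=\frac{|\nabla f|^2}{(4-f)^2},
\end{align*}
and everything reduces to an interior bound $w\le[\,\cdots\,]$.

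The analytic heart is the differential inequality for $w$. Using Lemma~\ref{basic-formula}, the Bochner-type bound $u|\nabla^2f|^2\ge\tfrac{u}{n}(\Delta f)^2$ from \eqref{eq4}, and the heat equation for $f$ (which lets one replace $\Delta f$ by $\partial_tf$ and $|\nabla f|^2$), I would show that $(\tfrac12\Delta-\partial_t)w$ admits a lower bound whose decisive term is \emph{positive and quadratic} in $w$; this superlinear feedback is precisely what upgrades the linear $\log$ of Theorem~\ref{Hamilton-1} to the square $(4-f)^2$. The lower curvature bound enters through $\mathcal{R}_t=\mathrm{Ric}_t+\partial_tg_t\ge-(k_1+k_2)$, exactly as packaged in \eqref{basic-ineq}, and contributes a term of order $(k_1+k_2)\,w$; the remaining terms are a transport term in $\langle\nabla f,\nabla w\rangle$ that vanishes at a spatial critical point.

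I would then localise with a cut-off $\psi=\chi(\rho_t(\cdot,x_0)/\rho)$ of the time-dependent distance, with $\psi\equiv1$ on $B_{\rho/2,T}$ and $\psi=0$ on $\partial B_{\rho,T}$; the trigonometric constants $\pi^2$, $(4-\pi)^2$ in the statement single out the cosine-type profile $\chi$ that minimises the cut-off error. Applying the parabolic maximum principle to $G=t\,\psi\,w$ over (a time-truncation of) $\overline{B_{\rho,T}}$: the factor $t$ makes $G$ vanish at $t=0$, and on the spatial boundary $\psi=0$, so the maximum is attained at an interior space-time point where $\nabla G=0$, $\Delta G\le0$ and $\partial_tG\ge0$. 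Feeding these into the inequality for $w$ turns it into an algebraic inequality for $G_{\max}$; the $\partial_t$ hitting the explicit factor $t$ produces an undivided constant giving the $1/t$ term, while the cut-off and curvature errors carry a factor $t$ and yield the $\rho^{-2}$ and $(k_1+k_2)$ contributions. Dividing by $t$ and restricting to $B_{\rho/2,T}$, where $\psi\equiv1$, gives the asserted bound, the overall factor $2$ and the precise coefficients emerging after tracking the $\tfrac12$-normalisation of the Laplacian and the sharp profile $\chi$.

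The main obstacle is the time-dependence of the metric, which intervenes beyond the static argument in two ways. First, differentiating $|\nabla f|^2=g_t^{ij}\partial_if\partial_jf$ in $t$ creates the term $-\partial_tg_t(\nabla f,\nabla f)$; this is the reason the hypothesis is phrased through $\mathcal{R}_t$, and it is absorbed cleanly by \eqref{basic-ineq}. Second, and more delicate, the cut-off depends on the \emph{moving} distance $\rho_t$, so I must control $\partial_t\rho_t$: by the first variation of $g_t$-arclength along a minimising geodesic together with $\partial_tg_t\ge-k_2$ one gets $\partial_t\rho_t\ge-\tfrac{k_2}{2}\rho_t$, which furnishes the $k_2$-part of the constant, while $\Delta_{g_t}\rho_t$ is controlled by the Laplacian comparison theorem under $\mathrm{Ric}_t\ge-k_1$, furnishing the $k_1$-part. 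Since $\rho_t$ is only Lipschitz at the cut locus, the maximum-principle computation must be made rigorous by Calabi's trick, replacing $\rho_t$ near the maximum point by a smooth upper barrier. The final effort is the careful bookkeeping of the cut-off error terms with the cosine profile so as to recover the exact coefficients $\tfrac{4\pi^2(n+7)}{(4-\pi)^2}$ and $\tfrac{(\pi^2+16)}{(4-\pi)^2}$.
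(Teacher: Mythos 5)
Your route is genuinely different from the paper's. The paper deduces Theorem \ref{cor1} from Lemma \ref{local-gradient-estimate}, whose proof is probabilistic: along the time-reversed $g_t$-Brownian motion $X^T_{T-t}$ one forms the process $S_t=\frac{|\nabla u_t|^2}{2u_t}-u_t(1-\log u_t)^2Z_t$ with $Z_t=c_1t^{-1}+c_2\varphi_t^{-2}(X^T_{T-t})+c_3$, shows by an It\^{o} drift computation that $S$ has nonpositive drift on $\{S_s\ge 0\}$ for the choice $c_1=1$, $c_2=\sup_D\{7|\nabla\varphi_t|^2-\varphi_t(\Delta-2\partial_t)\varphi_t\}$, $c_3=k_1+k_2$, and concludes $S_t\le 0$ from the blow-up of $S$ near the exit/initial time; the cutoff enters through the comparison function $Z_t$ rather than by multiplying the quantity being estimated, and the cut locus is handled by the fact that the Brownian motion spends zero time there (no Calabi trick). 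You instead propose the deterministic Souplet--Zhang scheme ($w=|\nabla f|^2/(4-f)^2$, maximum principle applied to $t\psi w$, Calabi's trick at the cut locus) --- essentially the analytic route of Sun's original paper, which this article set out to replace. That is a legitimate alternative in outline, and your identification of where the time-dependence enters ($\mathcal{R}_t\ge-(k_1+k_2)$ via \eqref{basic-ineq}; $\partial_t\rho_t\ge-\tfrac{k_2}{2}\rho_t$ by first variation; Laplacian comparison under ${\rm Ric}_t\ge-k_1$) coincides with the paper's treatment of the cutoff, i.e.\ its bound for $-(\Delta-2\partial_t)\varphi_t$.

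There is, however, a genuine gap on the quantitative side, and it is not mere bookkeeping. The stated constants are produced by the paper's specific mechanism: the factors $(4-\pi)^{-2}$ come from evaluating $\varphi_t(x)^{-2}\le 16/(4-\pi)^2$ at the point $x\in B_{\rho/2,T}$ --- possible only because the cutoff sits inside $Z_t$, so the final bound carries $\varphi_t^{-2}(x)$ --- and the ``$4$'' in $\bigl(4+\log(\|u\|/u)\bigr)^2$ comes from the renormalization $u\mapsto e^{-3}u/\|u\|_D$, i.e.\ from the inequality $2\log u\le-\tfrac32(1-\log u)$ used in the drift estimate. In your scheme the cutoff multiplies $w$, so on $B_{\rho/2,T}$, where $\psi\equiv1$, no factor $\varphi_t^{-2}(x)$ can appear; the constants instead arise from Young-inequality splittings at the maximum point of $t\psi w$, and nothing forces them to equal $\frac{4\pi^2(n+7)}{(4-\pi)^2\rho^2}$ and $\frac{(\pi^2+16)(k_1+k_2)}{(4-\pi)^2}$, nor is $4$ the canonical choice in $(4-f)^2$ (Souplet--Zhang uses $1-f$). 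As written, you would obtain an inequality of the same shape with some constants, which proves a theorem but not this theorem. A smaller imprecision: at the maximum of $G=t\psi w$ the transport term does not vanish; $\nabla G=0$ only gives $\psi\nabla w=-w\nabla\psi$, so $\langle\nabla f,\nabla w\rangle$ must be absorbed by Young's inequality into the $w^2$ term, which again feeds into the constants. If you want the analytic argument to yield the statement verbatim, the cleanest fix is to mirror the paper's localization: apply the maximum principle to the deterministic shadow of $S_t$, namely $F=\frac{|\nabla u|^2}{2u}-u(1-\log u)^2\bigl(t^{-1}+c_2\varphi_t^{-2}+k_1+k_2\bigr)$, which reproduces the paper's constants exactly.
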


 We want to introduce the Hamilton type inequity  on any relatively compact subset $D$ with nonempty smooth boundary first.
\begin{lemma}\label{local-gradient-estimate}
Let $D\subset M\times [0,T]$ be a relatively compact subset  with nonempty smooth boundary. Assume
 \eqref{condition-1}
 holds for some positive constants $k_1,k_2$ on $D$. Let  $u$ be a solution to the heat equation on $D$, which is positive and
continuous on $\overline{D}$. Let $\varphi\in C^{1,2}(\bar{D})$ with $\varphi>0$ and $\varphi|_{\partial D}=0$. Then, for $(x,t)\in D$ and $ t\neq 0$, there holds
\begin{align}\label{ineq-Li-Yau}
    \l|\frac{\nabla u}{u}\r|^2(x,t)\leq 2\l(\frac{1}{t}+\frac{\sup_{D}\{7|\nabla \varphi_t|^2-\varphi_t(\Delta -2\partial_t)\varphi_t\}}{\varphi_{t}^2(x)}+k_1+k_2\r)\l(4+\log \frac{\|u\|_{\overline{D}}}{u}\r)^2.
\end{align}
\end{lemma}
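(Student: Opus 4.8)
The plan is to run a Li--Yau/Souplet--Zhang type argument adapted to the $g_t$-dependent setting, using the Bochner identities packaged in Lemma \ref{basic-formula}. Since the target bound is \emph{quadratic} in the logarithm, I would not work with $|\nabla u|^2/u$ directly but normalize and pass to the logarithm: set $f=\log\bigl(u/\|u\|_{\overline{D}}\bigr)\le 0$, so that $f$ solves $\bigl(\tfrac12\Delta-\partial_t\bigr)f=-\tfrac12|\nabla f|^2$, put $\phi:=4-f\ge 4$, and consider $w:=|\nabla f|^2/\phi^2$. Because $|\nabla f|^2=|\nabla u|^2/u^2$ and $\phi=4+\log(\|u\|_{\overline{D}}/u)$, proving $w\le 2\bigl(\tfrac1t+\tfrac{\sup_D\{7|\nabla\varphi|^2-\varphi(\Delta-2\partial_t)\varphi\}}{\varphi^2}+k_1+k_2\bigr)$ is exactly \eqref{ineq-Li-Yau}.

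The analytic heart is the differential inequality for $w$. Writing $L:=\tfrac12\Delta-\partial_t$, one first checks the key identity $(\Delta-2\partial_t)\phi=|\nabla f|^2$, which already explains the operator $\Delta-2\partial_t$ in the statement; indeed $L\varphi^2=\varphi(\Delta-2\partial_t)\varphi+|\nabla\varphi|^2$. A Bochner computation of the same type as \eqref{eq3} gives $L|\nabla f|^2=|\nabla^2 f|^2-2\nabla^2 f(\nabla f,\nabla f)+\mathcal{R}_t(\nabla f,\nabla f)$, where $\mathcal{R}_t$ is precisely the place where the time dependence of the metric enters. Combining these with the product rule for $L$ yields $Lw=\phi^{-2}|\nabla^2 f|^2+(3-\phi)w^2+(4\phi^{-3}-2\phi^{-2})\nabla^2 f(\nabla f,\nabla f)+\phi^{-2}\mathcal{R}_t(\nabla f,\nabla f)$. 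Regrouping the Hessian cross term through $\langle\nabla w,\nabla f\rangle$ turns this into a genuinely \emph{good} feedback term $+\phi\,w^2$ (with $\phi\ge4$), which is what forces the shift $4$; the curvature bound $\mathcal{R}_t\ge-(k_1+k_2)$, coming from $\mathrm{Ric}_t\ge-k_1$ and $\partial_t g_t\ge-k_2$, turns the last term into $-(k_1+k_2)w$.

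To localize I would form $G:=t\,\varphi^2 w$ and evaluate at the given $(x,t)$ by the probabilistic device underlying the paper: run the $g_{(T-t)}$-Brownian motion from $x$ and use that the weighted process is a local supermartingale up to the first exit time $\tau_D$ from $D$. Since $\varphi|_{\partial D}=0$, the contribution at $\tau_D$ drops out, so comparing expectations anchors the estimate at $x$ and produces exactly the pointwise factor $1/\varphi^2(x)$ with a $\sup_D$ of the cutoff data along the path; this is where the probabilistic viewpoint is genuinely more convenient than a maximum-principle comparison, which would only return $\sup_D\varphi^2/\varphi^2(x)$. At the decisive step the relation $\nabla w=-2w\,\nabla\varphi/\varphi$ eliminates $\nabla w$ in favour of $\nabla\varphi$, the Hessian term is retained through the Kato-type bound $|\nabla^2 f|^2\ge\bigl(\nabla^2 f(\nabla f,\nabla f)\bigr)^2/|\nabla f|^4$ to keep all coefficients bounded uniformly in $\phi$, and a Young inequality splits the cross term $\langle\nabla\varphi,\nabla f\rangle$ against the good term; the residue combines with the $|\nabla\varphi|^2$ from $L\varphi^2$ into the constant $7$, the $t$-weight supplies the $1/t$, and solving the resulting quadratic inequality for $w$ produces the factor $2$.

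The main obstacle I expect is twofold. Conceptually it is the time dependence of $g_t$: every Bochner step and integration by parts generates extra $\partial_t g_t$ contributions, and the argument only closes because these assemble into the single tensor $\mathcal{R}_t$ controlled by the \eqref{basic-ineq}-type bound. Technically it is the sharp bookkeeping behind the explicit constants: one must keep the Hessian term (discarding it makes the $|\nabla\varphi|^2$-coefficient blow up as $u\to0$, i.e. as $\phi\to\infty$) and tune the Young parameter and the shift $4$ so that the $+\phi\,w^2$ term dominates uniformly in $\phi$ while the cutoff contributions organize exactly into $7|\nabla\varphi|^2-\varphi(\Delta-2\partial_t)\varphi$.
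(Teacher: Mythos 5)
Your analytic preparation is correct: the identities $(\tfrac12\Delta-\partial_t)f=-\tfrac12|\nabla f|^2$, $(\Delta-2\partial_t)\phi=|\nabla f|^2$, and your expansion of $(\tfrac12\Delta-\partial_t)w$ with the tensor $\mathcal{R}_t$ all check out, and your normalization $\phi=4+\log(\|u\|_{\overline D}/u)$ is exactly the paper's replacement of $u$ by $e^{-3}u/\|u\|_{\overline D}$ (so that $1-\log u\geq 4$). The proposal breaks, however, at the concluding probabilistic step, and the failure is structural, not cosmetic. First, $G=t\varphi^2w$ is not a local supermartingale in either time direction. The ``good feedback term'' $+\phi w^2$ exists only after you regroup the Hessian cross term through $\langle\nabla f,\nabla w\rangle$; without that regrouping, the drift of $w$ along the process $X^T_{T-\cdot}$, parametrized by the heat time $t$ as in the paper (so that the drift of $F(X^T_{T-t},t)$ is $-(\tfrac12\Delta-\partial_t)F$), contains $+(\phi-3)w^2\geq w^2$, a \emph{wrong-signed} quadratic term; the terms $\langle\nabla f,\nabla w\rangle$ and $\langle\nabla\varphi^2,\nabla w\rangle$ that would repair the sign are uncontrolled Hessian expressions along the path. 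Your decisive relation $\nabla w=-2w\nabla\varphi/\varphi$ is precisely the first-order condition $\nabla(\varphi^2w)=0$ at an interior spatial maximum of $G$ -- a maximum-principle device with no pathwise analogue, so it cannot enter a drift computation. Second, even if $G$ were a supermartingale, the expectation comparison cannot give the result: because $\varphi|_{\partial D}=0$ and of the factor $t$, the value of $G$ at the parabolic boundary is $0$, so comparing $G(x,t)$ with the stopped expectation yields either a bound on the wrong side or the absurd conclusion $G(x,t)\leq 0$.

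The paper closes the argument with a different grouping that avoids both problems. The localization is \emph{additive}, not multiplicative: $Z_t=c_1t^{-1}+c_2\varphi_t^{-2}(X^T_{T-t})+c_3$, and the process is $S_t=\frac{|\nabla u_t|^2}{2u_t}-u_t(1-\log u_t)^2Z_t$. With this grouping the only cross term is the bracket $\vd[u,\varphi_\cdot^{-2}(X^T_{T-\cdot})]_t$, which involves only first derivatives $\langle\nabla u,\nabla\varphi\rangle$ and is handled by Young's inequality; no $\nabla w$ ever appears. The dominating good term is quadratic in $Z_t$ and comes not from Bochner but from the log-structure identity $(\tfrac12\Delta-\partial_t)\bigl(u(1-\log u)^2\bigr)=\frac{|\nabla u|^2}{u}\log u$ combined with $\log u\leq-\tfrac34(1-\log u)$; the Bochner quadratic (Hessian) term is in fact \emph{discarded} -- only $\mathcal{R}_t\geq-(k_1+k_2)$, i.e.\ \eqref{basic-ineq}, is used -- contrary to your expectation that it must be retained. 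Finally, the conclusion is not an expectation identity: one proves $\vd S_t\stackrel{m}{\leq}-(1-\log u_t)(Z_t-k_1-k_2)S_t\,\vd t$, so the drift is nonpositive on $\{S\geq0\}$, while $Z_t\to\infty$ at the parabolic boundary forces $S\to-\infty$ as $s\downarrow 0\vee\tau(x)$; hence $S_t\leq0$, which is \eqref{ineq-Li-Yau} with the constant $7=3+4$ arising from $c_\varphi$ plus the Young term. To salvage your route you would have to either abandon the probabilistic language and run Souplet--Zhang's maximum principle honestly, or remove the $\langle\nabla f,\nabla w\rangle$ term by a change of drift (Girsanov) and adopt the additive localization; as written, the argument does not close.
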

To simplify the proof, we  set
$$u_{t}=u(X_{T-t}^T, t), \, \nabla u_t=\nabla u(\cdot, t)(X_{T-t}^T)\ \mbox{and}\  \Delta u_{t}=\Delta u(\cdot, t)(X_{T-t}^T).$$
  Let also
  \begin{align}\label{Q-mart}
  q(x,t)=\frac{|\nabla u|^2}{u}(x,t)\ \ \mbox{and}\ \ q_t=q(X_{T-t}^T,t).
  \end{align}
\begin{proof}[{\bf Proof\ of\ Lemma \ \ref{local-gradient-estimate}}]
The proof is  due to  that of \cite[Theorem 6.1]{AT}.
 We now study the following process on $D$,
$$S_t=\frac{|\nabla  u_t|^2}{2u_t}-u_t(1-\log u_t)^2Z_t,$$
where $$ Z_t:=\frac{c_1}{t}+\frac{c_2}{\varphi^2_t(X^T_{T-t})}+c_3 $$ for some constants $c_1,c_2,c_3>0$, which will be specified later.
We now  turn to investigate the martingale property of the process $S_t$.
 It is easy to see that the process $(X_{T-t}^T)$ is generated by
$-\frac{1}{2}\Delta $ and
it solves the
equation
$$\vd X^T_{T-t}=- U^T_{T-t}\circ \vd B_t,\  X^T_0:=\mathbf{p}U^T_0=x\in D,$$
up to the life time, where $U^T_{T-t}$ is the horizontal process of $ X^T_{T-t}$.
 Assume that $0<u\leq e^{-3}$ (the assumption will be removed in Lemma \ref{local-gradient-estimate} through replacing $u$ by $e^{-3}u/\|u\|_{D}$).
Using the fact that
 $$\nabla (u_t(1-\log u_t)^2)= (\log ^2u_t-1)\nabla u_t\ \  \mbox{and}\ \ \vd (u_t(1-\log u_t)^2){=} -q_t\log u_t\vd t, $$
 we get (the  indicator ``$\ \stackrel{m}{}\ $" stands for the modula differential of local martingales)
 \begin{align*}
 \vd S_t= &\frac{1}{2}\vd \l\{\frac{|\nabla u_t|^2}{u_t}\r\}-u_t(1-\log u_t)^2\vd Z_t-Z_t\vd (u_t(1-\log u_t)^2)\\
 & +2c_2(1-\log u_t)(1+\log u_t)\varphi_t^{-3}(X_{T-t}^T)\l<\nabla u_t,\nabla \varphi_t(X^T_{T-t})\r>\vd t\\
\stackrel{m}{\leq} & \frac{1}{2}(k_1+k_2)q_t\vd t+u_t(1-\log u_t)^2\l[c_1t^{-2}+c_2c_{\varphi}(X_{T-t}^T ,t)\varphi_t^{-4}(X_{T-t}^T)\r]\vd t+\log u_t q_t Z_t\vd t\\
 &+2c_2(1-\log u_t)(1+\log u_t)\varphi_t^{-3}(X^T_{T-t})\l<\nabla u_t,\nabla \varphi_t(X_{T-t}^T)\r>\vd t,
 \end{align*}
 where $c_{\varphi}(x,t)=\l[3|\nabla \varphi_t|^2-\varphi_t(\Delta -2\partial_t)\varphi_t\r](x).$
 More, we have $2\log u_t\leq -3(1-\log u_t)/2$ from $u\leq e^{-3}$, which, together with $|1+\log u_t|\leq 1-\log u_t$, yields
 \begin{align*}
 \vd S_t\stackrel{m}{\leq} & (\log u_t-1)\bigg\{\frac{1}{2}\l(\frac{3}{2}Z_t-k_1-k_2\r)q_t-u_t(1-\log u_t)\l[c_1t^{-2}-c_2c_{\varphi}(X_{T-t}^T,t)\varphi_t^{-4}(X_{T-t}^T)\r]\\
&-c_22\sqrt{2} (1-\log u_t)\varphi_t^{-2}(X_{T-t}^T)|\nabla \varphi_t(X_{T-t}^T)|\sqrt{u_t}\times\varphi_t^{-1}(X_{T-t}^T)\sqrt{\frac{q_t}{2}}\bigg\}\vd t\\
\leq & (\log u_t-1)\bigg\{\frac{1}{2}\l(\frac{3}{2}Z_t-k_1-k_2\r)q_t-u_t(1-\log u_t)\l[c_1t^{-2}-c_2c_{\varphi}(X_{T-t}^T,t)\varphi_t^{-4}(X_{T-t}^T)\r]\\
&-4c_2(1-\log u_t)^2\varphi_t^{-4}(X_{T-t}^T)|\nabla \varphi_t(X_{T-t}^T)|^2u_t-c_2 \varphi_t^{-2}(X_{T-t}^T)\frac{q_t}{4}\bigg\}\vd t\\
\leq & (\log u_t-1)\bigg\{
-u_t(1-\log u_t)^2 \times \l[c_1 t^{-2}+c_2\l(c_{\varphi}(X_{T-t}^T,t)+4|\nabla \varphi_t(X_{T-t}^T)|^2\r)\varphi_t^{-4}(X_{T-t}^T)\r]\\
&\qquad \qquad  \quad +(Z_t-k_1-k_2)\frac{1}{2}q_t
\bigg\}\vd t.
 \end{align*}
 Let $c_1=1, c_2=\sup_{D}\{c_{\varphi}+4|\nabla \varphi_t|^2\}$ and $c_3=k_1+k_2$. We conclude
 \begin{align*}
 \vd S_t&\stackrel{m}{\leq} -(1-\log u_t)(Z_t-k_1-k_2)\l[\frac{1}{2}q_t-u_t(1-\log u_t)^2Z_t\r]\vd t\\
 &=-(1-
 \log u_t)(Z_t-k_1-k_2)S_t\vd t.
 \end{align*}
 Now we consider the process $\{X^T_{T-s}\}_{s\in [0,t]}$ starting from $x$ at time $t$.
 This process has nonpositive drift on $\{S_s\geq 0\}$. On the other hand, $S_s$ converges to $-\infty$ as $s\rightarrow 0\vee \tau(x)$, where $\tau(x):=\sup\{s\in[0,t]: X_{T-s}^T\notin D, X_{T-t}^T=x\},\ \sup \varnothing =0$.
 Therefore, $S_t\leq 0$, i.e.
 $$\frac{|\nabla u_t|^2}{2u_t}\leq (1-\log u_t)^2 Z_t.$$
 Replacing $u$ by $e^{-3}u/\|u\|_D$, we complete the proof.
\end{proof}

 \begin{proof}[{\bf Proof of Theorem \ref{cor1}}]
   To specify the constants of \eqref{ineq-Li-Yau} in terms of $\varphi$,  an explicit choice for $\varphi$ has to be done.   For any $y\in B_{\rho,T}$, i.e.  $\rho_t(x_0,y)\leq \rho$, $t\in [0,T]$,
 \begin{align}\label{varphi}
  \varphi(t,y):=\cos\frac{\pi \rho_{t}(x_0,y)}{2\rho}.
 \end{align}
 Then, $\varphi$ is nonnegative and bounded by $1$, and $\varphi$ vanishes on $\partial B_{\rho,T}$.
 Fix $(x,t)\in B_{\rho/2,T}$, we have
 \begin{equation}\label{estimate-constant-1}
 \varphi_{t}(x)= \varphi(t,x)=\cos\frac{\pi \rho_{t}(x_0,x)}{2\rho}\geq 1-\frac{\pi \rho_{t}(x_0,x)}{2\rho}\geq 1-\frac{\pi}{4}.
\end{equation}
By convention, $\Delta \varphi_t^{-2}=0$, when $\varphi_t^{-2}$ is not differentiable.  As a consequence, all the estimates in Lemma \ref{local-gradient-estimate}  still hold true with $\varphi$ defined by \eqref{varphi}.

 Now, we derive  explicit estimates for the constants. To this end, we observe that
 \begin{equation}\label{estimate-constant-3}
 \|\nabla \varphi_t\|_{B_{\rho,T}}\leq \frac{\pi}{2\rho}.
\end{equation}
Since ${\rm Ric}_t \geq -k_1$, $\partial_tg_t \geq -k_2$ for $k_1,k_2>0$, by the index lemma, we have
 \begin{align*}%\label{esti-varphi}
    -(\Delta -2\partial_t)\varphi_t
    = &\sin \frac{\pi\rho_{t}(x_0,y)}{2\rho}\frac{\pi}{2\rho}(\Delta -2\partial_t)\rho_{t}(x_0,\cdot)(y)+\cos\frac{\pi \rho_t(x_0,y)}{2\rho}\cdot \frac{\pi^2}{4\rho^2}|\nabla \rho_{t}|^2\nonumber\\
    \leq & \sin \frac{\pi \rho_{t}(x_0,y)}{2\rho}\frac{\pi}{2\rho}\l[\frac{n-1}{\rho_{t}(x_0,y)}+(k_1+k_2)\rho_{t}(x_0,y)\r]+\frac{\pi^2}{4\rho^2}\nonumber\\
    \leq &\frac{\pi^2n}{4\rho^2}+\frac{\pi}{2}(k_1+k_2).
 \end{align*}
Thus, we further have
\begin{equation*}%\label{estimate-constant-2}
\sup_{B_{\rho,T}}\{7|\nabla \varphi_t|^2-\varphi_t(\Delta -2\partial_t)\varphi_t\}\leq \frac{\pi^2(n+7)}{4\rho^2}+\frac{\pi}{2}(k_1+k_2).
\end{equation*}
Combining this with  \eqref{ineq-Li-Yau} and \eqref{estimate-constant-1}, we complete the proof.
\end{proof}

\section{ Li-Yau type gradient inequalities}
\hspace{0.5cm} This section is devoted to extending the argument of the Li-Yau type inequalities on Riemannian manifolds to our setting.
\subsection{Li-Yau type  inequalities}
\hspace{0.5cm}
First, we present the local version of the Li-Yau gradient inequality as follows.
 \begin{theorem}[Local gradient inequality of Li-Yau type]\label{local-th}
Assume that there exist some nonnegative constants $k_1,k_2,k_3,k_4$ such that
 \begin{align}\label{condition-2}
 {\rm Ric}_t\geq -k_1,\ \ \ -k_2 \leq \partial_tg_t\leq k_3,\ \ \ |\nabla (\partial_tg_t)|\leq k_4
 \end{align}
  holds   on $B_{\rho,T}$. Let  $u$ be a solution to the heat equation on $B_{\rho, T}$, which is positive and
continuous on  $\overline{B_{\rho, T}}$.  For any $\alpha>1$ and $t>0$,
\begin{align*}%\label{local-li-Yau}
\frac{|\nabla u|^2}{u^2}-\alpha\frac{\Delta u}{u}\leq  n\alpha^2 \bigg[&\frac{2}{t}+\frac{8\pi^2}{(4-\pi)^2\rho^2}\l(n+3+\frac{\alpha^2 n}{\alpha-1}\r) +\frac{16{\pi}(k_1+k_2)}{(4-\pi)^2}
    +\max\{k_2,k_3\}+k_3\nonumber\\
    &+\sqrt{{2 k_4}}+\frac{k_1+ k_4}{\alpha-1}\bigg]
    \end{align*}
    holds on $B_{\rho/2,T}$.
\end{theorem}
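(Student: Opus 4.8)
The plan is to reproduce the two-stage strategy behind Theorem~\ref{cor1}. First I would prove an abstract local Li--Yau inequality on an arbitrary relatively compact domain $D\subset M\times[0,T]$ with smooth boundary, carrying a positive cutoff $\varphi\in C^{1,2}(\overline D)$ with $\varphi|_{\partial D}=0$, in which the geometric constants enter only through $\sup_D$ of an explicit expression in $\varphi_t$, $\nabla\varphi_t$ and $(\Delta-2\partial_t)\varphi_t$; this is the exact analogue of Lemma~\ref{local-gradient-estimate}. Then I would specialise $D=B_{\rho,T}$ and $\varphi(t,y)=\cos\frac{\pi\rho_t(x_0,y)}{2\rho}$ as in \eqref{varphi}, and bound the cutoff-dependent constants just as in the proof of Theorem~\ref{cor1}, using \eqref{estimate-constant-1}, \eqref{estimate-constant-3} and the index-lemma estimate for $(\Delta-2\partial_t)\varphi_t$. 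The factor $16/(4-\pi)^2$, the $\rho^{-2}$ terms $\frac{8\pi^2}{(4-\pi)^2\rho^2}\bigl(n+3+\frac{\alpha^2 n}{\alpha-1}\bigr)$ (whose $\frac{\alpha^2 n}{\alpha-1}$ part reflects the interaction of the completion of square with $|\nabla\varphi_t|^2$) and $\frac{16\pi(k_1+k_2)}{(4-\pi)^2}$ all come out of these bounds, while the remaining constants $\max\{k_2,k_3\}+k_3$, $\sqrt{2k_4}$ and $\frac{k_1+k_4}{\alpha-1}$ must be produced by the drift estimate of the first stage.

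For that first stage, write $q=|\nabla u|^2/u$ and $G=q-\alpha\Delta u$, so that the quantity to be bounded is $G/u=\frac{|\nabla u|^2}{u^2}-\alpha\frac{\Delta u}{u}$. A useful preliminary observation is that $u_t:=u(X^T_{T-t},t)$ is a local martingale, since its drift $-(\frac12\Delta-\partial_t)u$ vanishes by \eqref{heat-equ}. The drift of $G_t$ is $-(\frac12\Delta-\partial_t)G$: its $q$-part is controlled by Lemma~\ref{basic-formula} together with \eqref{eq4}, while its $\Delta u$-part yields the genuinely new term $(\frac12\Delta-\partial_t)\Delta u=-(\partial_t\Delta)u$, which I would expand as a pairing of $\partial_tg_t$ with $\nabla^2u$ plus a pairing of $\nabla(\partial_tg_t)$ with $\nabla u$. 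Rewriting the Bochner square \eqref{eq4} as $\frac1u|\nabla^2u-\frac{\nabla u\otimes\nabla u}{u}|^2\geq\frac1{n\alpha^2 u}\bigl(G+(\alpha-1)q\bigr)^2$ (using $\Delta u-q=-(G+(\alpha-1)q)/\alpha$), I would absorb the $\nabla^2u$-pairing into this square and complete the square in $G$; this is exactly what manufactures the prefactor $n\alpha^2$ and the $1/(\alpha-1)$ terms. The hypotheses \eqref{condition-2} enter as follows: $\mathcal R_t\geq-(k_1+k_2)$ gives $\frac{\mathcal R_t(\nabla u,\nabla u)}u\geq-(k_1+k_2)q$; the two-sided bound $-k_2\leq\partial_tg_t\leq k_3$ controls the Hessian pairing and produces $\max\{k_2,k_3\}+k_3$; and $|\nabla(\partial_tg_t)|\leq k_4$ with Young's inequality produces $\sqrt{2k_4}$ together with the $k_4/(\alpha-1)$ contribution.

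I would then assemble the semimartingale $S_t=G_t/u_t-n\alpha^2 Z_t$ with the barrier $Z_t=\frac{c_1}{t}+\frac{c_2}{\varphi_t^2(X^T_{T-t})}+c_3$, exactly as in Lemma~\ref{local-gradient-estimate}. Differentiating $1/\varphi_t^2$ along the process produces the same $\langle\nabla u,\nabla\varphi_t\rangle$ cross-terms as there, controlled by the quantity $7|\nabla\varphi_t|^2-\varphi_t(\Delta-2\partial_t)\varphi_t$; choosing $c_1,c_2,c_3$ as in that lemma should make the drift of $S_t$ nonpositive on $\{S_t\geq0\}$. The conclusion is then identical in form: running $\{X^T_{T-s}\}_{s\in[0,t]}$ from $x$ at time $t$, $S_s$ has nonpositive drift on $\{S_s\geq0\}$ while $S_s\to-\infty$ both as $s\to0$ (the $c_1/s$ barrier blows up) and as $s\to\tau(x)$, the exit time from $D$ (the $c_2/\varphi_s^2$ barrier blows up as $\varphi\to0$ on $\partial D$). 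Hence $S_t\leq0$, i.e. $\frac{|\nabla u|^2}{u^2}-\alpha\frac{\Delta u}{u}\leq n\alpha^2 Z_t$ at $(x,t)$, which is the abstract $D$-version; specialising $\varphi$ as above then gives the stated inequality on $B_{\rho/2,T}$.

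The main obstacle is precisely the commutator $(\partial_t\Delta)u$ flagged in the Introduction, arising from the $t$-dependence of the metric. Its two pieces must be disposed of differently: the $\nabla^2u$-piece has to be absorbed into the Bochner square, at the unavoidable cost of the $n\alpha^2$ and $1/(\alpha-1)$ factors, whereas the $\nabla(\partial_tg_t)$-piece has to be tamed by Young's inequality to yield $\sqrt{2k_4}$. The delicate bookkeeping is to keep every weight consistent --- the completion-of-square constants, the $\varphi$-derivative cross-terms, and the choice of $c_1,c_2,c_3$ --- so that after all cancellations the total drift collapses to a nonpositive multiple of $S_t$ on $\{S_t\geq0\}$. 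Getting the signs right and the square completed correctly under the moving metric is where essentially all the work lies.
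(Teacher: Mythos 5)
Your overall architecture is the same as the paper's: an abstract Li--Yau estimate on a relatively compact $D$ with cutoff $\varphi$ (the paper's Lemma \ref{Li-Yau-gradient-inequality}), followed by the specialisation $D=B_{\rho,T}$, $\varphi$ as in \eqref{varphi}, with \eqref{estimate-constant-1}, \eqref{estimate-constant-3} and the index lemma producing exactly the stated constants; your identification of the commutator $(\partial_t\Delta)u$ (the paper's use of \cite[Lemma 3.2]{Sun11}) and of the origin of $\max\{k_2,k_3\}$, $\sqrt{2k_4}$ and the $1/(\alpha-1)$ terms is also correct. However, there is a genuine gap at the core of your stage one: the semimartingale you build, $S_t=G_t/u_t-n\alpha^2 Z_t$, does not have the drift property you assert. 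Write $f=\log u$ and $w=G/u=(1-\alpha)|\nabla f|^2-\alpha\Delta f$. Since $f$ solves $\partial_t f=\tfrac12\Delta f+\tfrac12|\nabla f|^2$ rather than the heat equation, the drift of $w(X^T_{T-s},s)$, i.e. $\pm(\tfrac12\Delta-\partial_t)w$ along the process, contains the transport term $\langle\nabla f,\nabla w\rangle$ in addition to the Bochner and curvature terms. This term involves third derivatives of $u$, is not controlled by \eqref{condition-2}, and has no sign on $\{S_t\geq 0\}$; in the classical PDE proof it is killed by evaluating at an interior maximum, but in the pathwise argument there is no maximum principle available, so your claim that the constants $c_1,c_2,c_3$ of Lemma \ref{local-gradient-estimate} "should make the drift nonpositive" cannot be established. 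This is precisely why the paper never divides by $u$: its process is $S_{t,\alpha}=h_t(q_t-\alpha\Delta u_t-n\beta u_tY_t)=h_tu_t(w_t-n\beta Y_t)$. Your own "useful preliminary observation" that $u_t$ is a local martingale is exactly what makes the multiplied form work: in the It\^o product rule the cross-variation $d[u_\cdot,w_\cdot]_t=\pm u_t\langle\nabla f,\nabla w\rangle\,dt$ cancels the transport term in $u_t\,dw_t$, which is equivalent to the paper's procedure of computing the drifts of $q_t$ and $\Delta u_t$ directly from \eqref{eq3}, \eqref{eq4} and \cite[Lemma 3.2]{Sun11}, where only the Hessian, $\partial_tg_t$, $\nabla(\partial_tg_t)$ and curvature ever appear.

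A second, lesser deviation: you drop the integrating factor $h_t$ (the solution of $\dot h_t=h_tY_t$, $h_T=1$). In the paper this factor is what absorbs the linear-in-$q_t$ terms $(k_1+(\alpha-1)k_3+k_4)h_tq_t$ and the $\langle\nabla u_t,\nabla\varphi_t\rangle$ cross-variation \eqref{estimate-S2} through the matching term $-(\alpha-1)\dot h_tq_t$. Without $h_t$ you would have to absorb all of these into the $(\alpha-1)^2q_t^2$ piece of your expanded square $(G+(\alpha-1)q)^2$ by Young's inequality; this is plausible but is an additional argument you do not supply, and it changes the bookkeeping that produces the final constants. Once the process is replaced by the paper's $u$-multiplied, $h_t$-weighted one (and the cut-locus points are handled by the convention justified via the a.s.\ zero occupation time of $\bigcup_{t\in[0,T]}{\rm Cut}_t(x_0)$, which you omit), your stage two goes through verbatim and yields the stated inequality on $B_{\rho/2,T}$.
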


Parallel to Theorem \ref{cor1}, we need to consider the local Li-Yau inequality on some subset $D$.
\begin{lemma}\label{Li-Yau-gradient-inequality}
Let $D\subset M\times [0,T]$ be a relatively compact subset  with nonempty smooth boundary. Assume
 \eqref{condition-2}
 holds for some non-negative constants $k_1,k_2,k_3,k_4$ on $D$. Let  $u$ be a solution to the heat equation on $D$, which is positive and
continuous on $\overline{D}$. Let $\varphi\in C^{1,2}(\overline{D})$ with $\varphi>0$ and $\varphi|_{\partial D}=0$.   For any $\alpha>1$ and $t>0$, we have
\begin{align*}%\label{local-li-Yau}
\frac{|\nabla u|^2}{u^2}-\alpha\frac{\Delta u}{u}\leq  n\alpha^2 \bigg(&\frac{2}{t}+\frac{2C_{\varphi,\alpha,n}}{\varphi_{t}^{2}(x)}
    +\max\{k_2,k_3\}+k_3+\sqrt{{2 k_4}}+\frac{k_1+ k_4}{\alpha-1}\bigg)
    \end{align*}
    holds on $D$,
where $C_{\varphi,\alpha,n}:=\sup_D\{(3+\alpha^2(\alpha-1)^{-1} n)|\nabla \varphi_t|^2-\varphi_t(\Delta -2\partial_t)\varphi_t\}$.
\end{lemma}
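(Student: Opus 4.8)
The plan is to transcribe the proof of Lemma~\ref{local-gradient-estimate}, replacing the Hamilton functional by the Li--Yau combination $\frac{|\nabla u|^2}{u}-\alpha\Delta u$. With $q_t=\frac{|\nabla u|^2}{u}(X_{T-t}^T,t)$ as in \eqref{Q-mart} and $f=\log u$, I would study on $D$ the semimartingale
\[
S_t=q_t-\alpha\Delta u_t-u_tZ_t,\qquad Z_t=n\alpha^2\l(\frac{c_1}{t}+\frac{c_2}{\varphi_t^2(X_{T-t}^T)}+c_3\r),
\]
with positive constants $c_1,c_2,c_3$ to be fixed. Since $S_t$ is homogeneous of degree one in $u$, no normalisation of $u$ is needed (in contrast with the $\log$-factor and the hypothesis $u\le e^{-3}$ in Lemma~\ref{local-gradient-estimate}); and because $S_t/u_t=\frac{|\nabla u|^2}{u^2}-\alpha\frac{\Delta u}{u}-Z_t$ along the process, once $S_t\le 0$ is established the assertion follows on dividing by $u_t>0$ and reading off $c_1=2$, $c_2=2C_{\varphi,\alpha,n}$ and $c_3=\max\{k_2,k_3\}+k_3+\sqrt{2k_4}+\frac{k_1+k_4}{\alpha-1}$.

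The heart of the matter is the It\^o drift of $S_t$. As in Lemma~\ref{local-gradient-estimate} the process is generated, up to lifetime, by $-\tfrac12\Delta$, so the drift of $G(X_{T-t}^T,t)$ equals $-(\tfrac12\Delta-\partial_t)G$ and $u_t$ is a local martingale. For the $q_t$ term I invoke \eqref{eq3} and \eqref{eq4}, which produce the favourable square $\frac1u|\nabla^2u-\frac{\nabla u\otimes\nabla u}{u}|^2\ge\frac{u}{n}(\Delta f)^2$ together with the curvature term $\frac{{\mathcal R}_t(\nabla u,\nabla u)}{u}$, the latter bounded below by $-(k_1+k_2)q_t$ through ${\rm Ric}_t\ge-k_1$ and $\partial_tg_t\ge-k_2$. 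For the $\Delta u_t$ term the time dependence of the metric yields $(\tfrac12\Delta-\partial_t)\Delta u=-[\partial_t,\Delta]u$, and the commutator splits into a Hessian contraction $\langle\partial_tg_t,\nabla^2u\rangle$ and a divergence piece $\langle{\rm div}(\partial_tg_t)-\tfrac12\nabla{\rm tr}(\partial_tg_t),\nabla u\rangle$. Finally $u_tZ_t$ contributes, besides $u_t$ times the drift of $Z_t$ (whose $c_1/t$ part produces the decisive $t^{-2}$ contribution), a cross-variation term of the form $\langle\nabla u_t,\nabla\varphi_t\rangle$, exactly as in the proof of Lemma~\ref{local-gradient-estimate}.

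Everything is then absorbed into the good square. On $\{S_t\ge0\}$ the constraint $q_t-\alpha\Delta u_t\ge u_tZ_t$ forces $\Delta f$ to be negative with $|\Delta f|\ge\tfrac1\alpha\big((\alpha-1)|\nabla f|^2+Z_t\big)$, whence $\frac{u}{n}(\Delta f)^2\ge\frac{u}{n\alpha^2}\l((\alpha-1)|\nabla f|^2+Z_t\r)^2$. Expanding the square exhibits both the prefactor $n\alpha^2$ (from the $Z_t^2$ term, balanced against the $t^{-2}$ contribution above, which fixes $c_1=2$) and the quadratic slack $(\alpha-1)|\nabla f|^2$ together with the cross term $\propto Z_t q_t$ that soak up the linear curvature contributions. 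Concretely, the ${\rm Ric}_t$ term and the divergence piece of the commutator, the latter estimated by Cauchy--Schwarz through $|\nabla(\partial_tg_t)|\le k_4$, are dominated via this slack and Young's inequality, producing the $\frac{k_1+k_4}{\alpha-1}$ and the balanced $\sqrt{2k_4}$ contributions; the bounds $-k_2\le\partial_tg_t\le k_3$ enter the Hessian contraction through its operator norm $\max\{k_2,k_3\}$ and through the rank-one part $\frac{\partial_tg_t(\nabla u,\nabla u)}{u}\le k_3q_t$; and the cross-variation term $\langle\nabla u_t,\nabla\varphi_t\rangle$ is Young-split against the good square, which is exactly why $C_{\varphi,\alpha,n}$ carries the coefficient $3+\alpha^2(\alpha-1)^{-1}n$. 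Choosing $c_1,c_2,c_3$ as stated then forces the drift of $S_t$ to be $\le0$ on $\{S_t\ge0\}$.

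To conclude I run $\{X_{T-s}^T\}_{s\in[0,t]}$ starting from $x$ at parameter $t$, exactly as in Lemma~\ref{local-gradient-estimate}: the drift of $S_s$ is nonpositive wherever $S_s\ge0$, while $S_s\to-\infty$ as $s\to0\vee\tau(x)$, since $Z_s$ blows up through $c_1/s$ as $s\to0$ and through $\varphi_s^{-2}$ as the path approaches $\partial D$. Hence $S_t\le0$, which is the claim. I expect the commutator $[\partial_t,\Delta]u$ to be the main obstacle: packaging its Hessian and divergence parts and balancing them against $\frac{u}{n}(\Delta f)^2$ so that the $\alpha$-dependence assembles into precisely $C_{\varphi,\alpha,n}$, $\sqrt{2k_4}$ and $\frac{k_1+k_4}{\alpha-1}$ is the delicate bookkeeping, and it is exactly the ``additional terms from the time derivative of the metric'' flagged in the introduction.
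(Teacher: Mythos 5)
Your proposal is correct, and its skeleton is the paper's: compute the It\^o drift of a Li--Yau functional along the time-reversed $g_{(T-t)}$-Brownian motion using \eqref{eq3}, \eqref{eq4} and Sun's formula for $\vd(\Delta u_t)$, force the drift to be nonpositive, and conclude from the blow-up of the penalty term as $s\rightarrow 0\vee\tau(x)$. The mechanism for nonpositivity, however, is genuinely different. The paper multiplies by an integrating factor $h_t$ solving $\dot h_t=h_tY_t$, $h_T=1$, works with $S_{t,\alpha}=h_t(q_t-\alpha\Delta u_t-n\beta u_tY_t)$, and makes the drift nonpositive \emph{unconditionally}: the Young completion \eqref{s2-1} trades $-\alpha Y_t\Delta f$ against $\frac{a\alpha}{n}(\Delta f)^2$, while the term $(\alpha-1)\dot h_tq_t$ (available only because of $h_t$, and because $Y_t$ carries the summand $\frac{k_1+(\alpha-1)k_3+k_4}{\alpha-1}$) absorbs the linear-in-$q_t$ curvature terms and the cross-variation \eqref{estimate-S2}. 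You drop $h_t$ and $\beta$ altogether and run the \emph{conditional} argument instead: on $\{S_t\geq 0\}$ the constraint forces $-\alpha\Delta f\geq Z_t+(\alpha-1)|\nabla f|^2$, and expanding $\frac{u}{n\alpha^2}\left(Z_t+(\alpha-1)|\nabla f|^2\right)^2$ yields both the $Z_t^2$ term (which beats $c_1t^{-2}$, $c_2c_\varphi\varphi_t^{-4}$, $\max\{k_2^2,k_3^2\}$ and $k_4$ upon choosing the stated constants) and the cross term $\frac{2(\alpha-1)}{n\alpha^2}q_tZ_t$ that soaks up the linear terms; this is precisely the device the paper itself uses for the Hamilton-type Lemma~\ref{local-gradient-estimate}, and the route of \cite{AT}. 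The constants close in your scheme exactly as claimed ($c_1=2$, $c_2=2C_{\varphi,\alpha,n}$, and your $c_3$), so there is no gap. The trade-off: the paper's $h_t$ yields a genuine local supermartingale with no restriction to $\{S\geq 0\}$, at the cost of an auxiliary ODE and the extra parameter $\beta$; your version is more elementary and makes the scale-invariance in $u$ transparent, at the price that the drift bound holds only on $\{S_t\geq 0\}$, which the first-hitting/blow-up argument handles exactly as in Lemma~\ref{local-gradient-estimate}.
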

\begin{proof}[{\bf Proof}]
Define
\begin{align*}%\label{Yt-1}
   Y_t=c_1t^{-1}+c_2\varphi_t^{-2}(X_{T-t}^T)+c_3\max\{k_2,k_3\}+c_4\sqrt{k_4}+\frac{k_1+(\alpha-1) k_3+ k_4}{\alpha-1},\ \alpha>1,
\end{align*}
where $c_1,c_2, c_3, c_4>0$ are constants, which will be specified later.
Let $h_t$ be the solution of  $\dot{h}_t=h_tY_t,\ h_T=1$.
Consider  the following process,
\begin{align*}%\label{St-1}
S_{t,\alpha}:=h_t(q_t-\alpha\Delta u_t)-\beta nu_t\dot{h}_t\equiv h_t\l(q_t-\alpha\Delta u_t-n\beta u_t Y_t\r),
\end{align*}
where  $\beta>0$ will also be specified later and $q_t$ is defined in \eqref{Q-mart}.
We first investigate the martingale property of $S_{t,\alpha}$.
By \eqref{eq3} and \eqref{eq4},
we find
$$\vd (h_tq_t)\stackrel{m}{\leq}  \bigg[-h_tu_t|\nabla^2f|^2-\dot{h}_tq_t+h_tu_t{\rm Ric}_t(\nabla f,\nabla f)+h_tu_t\partial_tg_t(\nabla f,\nabla f)\bigg]\vd t, $$
%where  ${\rm Hess}^{t}_f(X,Y)=\l<\nabla ^{t}_{X}\nabla f,Y\r>$ for $X,Y \in TM$, $f=\log u$ and $\|\cdot\|_{HS}$ is the Hilbert-Schmidt norm.
%Observing (modula differential of local martingales)
By this and observing from \cite[Lemma 3.2]{Sun11},
\begin{align*}
\vd (\Delta u_t)&\stackrel{m}{=}\l[-u_t\l<\partial_tg_t,\nabla^2f\r>-u_t \partial_tg_t(\nabla f,\nabla f )-u_t\l<{\rm div}(\partial_tg_t)-\frac{1}{2}\nabla ({\rm tr}_g (\partial_tg_t)), \nabla f\r>\r](X_{T-t}^T,t)\vd t\\
 &\geq \l[-u_t\l<\partial_tg_t,\nabla^2 f\r>-u_t \partial_tg_t(\nabla f,\nabla f )-2\sqrt{n}k_4 u_t |\nabla f|\r](X_{T-t}^T,t)\vd t\\
 & \geq \l[-u_t\l<\partial_tg_t,\nabla^2 f\r>-u_t \partial_tg_t(\nabla f,\nabla f )-\alpha n k_4u_t|\nabla f|^2-\alpha^{-1} k_4 u_t \r](X_{T-t}^T,t)\vd t,
\end{align*}
%where $(\cdot,\cdot)_{\wedge^2T^*M}$ is the fiber metric on $\wedge^2T^*M$  induced by $g_t$,
 we have
\begin{align*}%\label{estimate-S1-1}
\vd S_{t,\alpha}{=}&\vd h_tq_t-n\beta u_t\vd (h_tY_t)-n\beta\vd [u,\dot{h}]_t-\alpha\vd h_t\Delta u_t\nonumber\\
\stackrel{m}{\leq} & \big[-h_tu_t|\nabla^2f|^2+\alpha h_tu_t\l< \partial_tg_t, \nabla^2f\r>-\alpha\dot{h}_t(\Delta u_t)\big]\vd t\nonumber\\
&+(\dot{h}_t+k_1h_t+ k_4h_t+(\alpha-1)k_3h_t)q_t \vd t-n\beta u_th_tY_t^2\vd t\nonumber\\
&+n\beta u_t h_t\big[c_1t^{-2}+c_2c_{\varphi}(X_{T-t}^T,t)\varphi^{-4}_t(X_{T-t}^T)+\beta^{-1}k_4\alpha^2\big]\vd t-n c_2\beta h_t\vd [u,\varphi_{\cdot}^{-2}(X^T_{T-\cdot})]_t,
\end{align*}
where $c_{\varphi}(x,t)=\l[3|\nabla \varphi_t|^2-\varphi_t(\Delta -2\partial_t)\varphi_t\r](x).$
As $\Delta f_t=\frac{1}{u_t}(\Delta u_t-q_t)$, we have
\begin{align}\label{estimate-S1-2}
\vd S_{t,\alpha}\stackrel{m}{\leq} &\big[-h_tu_t|\nabla^2f|^2(X_{T-t}^T,t)+\alpha h_tu_t\l<\partial_tg_t,\nabla^2f\r>(X_{T-t}^T,t)-\alpha h_tY_tu_t(\Delta f_t) \big]\vd t\nonumber\\
&+(k_1+(\alpha-1) k_3+ k_4)h_tq_t \vd t-(\alpha-1)\dot{h}_tq_t\vd t
-n\beta u_th_tY_t^2\vd t\nonumber\\
&+n\beta u_t h_t\big[c_1t^{-2}
+c_2c_{\varphi}(X_{T-t}^T, t)\varphi^{-4}_t(X_{T-t}^T)+\beta^{-1} k_4\alpha^2\big]\vd t\nonumber\\
&-n c_2\beta h_t\vd [u,\varphi_{\cdot}^{-2}(X^T_{T-\cdot})]_t.
\end{align}
For any $a,b>0$ such that $a+b=\alpha^{-1}$,
\begin{align}\label{esti-hess}
    |\nabla^2f|^2-\alpha\l<\partial_tg_t,\nabla^2f\r>
=&\l\{\l(a\alpha+b\alpha\r)|\nabla^2f|^2-\alpha\r<\partial_tg_t,\nabla^2f\l>\r\}\nonumber\\
=&\bigg[a\alpha|\nabla^2f|^2+\alpha\l|\sqrt{b}\nabla ^2 f-\frac{1}{2\sqrt{b}}\partial_tg_{t}\r|^2-\frac{\alpha}{4b}|\partial_tg_{t}|^2\bigg]\nonumber\\
\geq &\bigg[a\alpha|\nabla^2f|^2-\frac{\alpha}{4b}|\partial_tg_{t}|^2\bigg]
\geq \frac{a\alpha}{n}(\Delta f)^2-\frac{\alpha n}{4b}\max \{k_2^2,k_3^2\}
\end{align}
at $(x,t)\in D$. So,
\begin{align}\label{s2-1}
|\nabla^2f|^2-\alpha\l<\partial_tg_t,\nabla^2f\r> -\alpha Y_t (\Delta f) &
\geq \frac{a\alpha}{n} (\Delta f)^2-\frac{\alpha b}{4b}\max\l\{k_2^2,k_3^2\r\}-\alpha (\Delta f)Y_t\nonumber\\
&\geq -\frac{n\alpha}{4a}Y_t^2-\frac{\alpha n}{4b} \max\{k_2^2,k_3^2\}.
\end{align}
In addition, for the last term in \eqref{estimate-S1-2}, we have
\begin{align}\label{estimate-S2}
    -n\beta & c_2h_t \vd [u,\varphi_{\cdot}^{-2}(X_{T-\cdot}^T)]_t=2n\beta c_2h_t\varphi_t^{-3}\l<\nabla u_t,\nabla \varphi_t(X^T_{T-t})\r>\vd t\nonumber\\
    &\leq 2n c_2h_t\l[\varphi^{-1}_t((\alpha-1)^{-1}nu_t)^{-1/2}|\nabla u_t|\times \beta((\alpha-1)^{-1}nu_t)^{1/2}\varphi_t^{-2}|\nabla \varphi_t|\r](X^T_{T-t})\vd t\nonumber\\
    & \leq \l[(\alpha-1)c_2h_t\varphi_t^{-2}u_t^{-1}|\nabla u_t|^2+n^2c_2h_t\beta^2(\alpha-1)^{-1}u_t\varphi_t^{-4}
   |\nabla \varphi_t|^2\r](X^T_{T-t})\vd t\nonumber\\
    &\leq \l\{[(\alpha-1)\dot{h}_t-(k_1+k_4+(\alpha-1) k_3)h_t]q_t+c_2(\alpha-1)^{-1} \beta^2 n^2u_th_t\varphi_t^{-4}|\nabla \varphi_t|^2(X^T_{T-t})\r\}\vd t.
\end{align}
 Combining this with  \eqref{estimate-S1-2}, \eqref{s2-1} and \eqref{estimate-S2}, we arrive at
\begin{align*}%\label{estimate-S1-3}
\vd S_{t,\alpha}\stackrel{m}{\leq} & \l[-nh_tY_t^2u_t\l(\beta-\frac{\alpha}{4a}\r)+\frac{n\alpha}{4b}u_th_t\max\{k_2^2,k_3^2\}\r]\vd t\nonumber\\
&+nu_th_t\l(\beta c_1t^{-2}+\beta c_2C_{\varphi,\alpha,\beta,n}\varphi_t^{-4}(X_{T-t}^T)+\alpha^2 k_4\r)\vd t,
\end{align*}
where $a+b=\alpha^{-1}$ and $C_{\varphi,\alpha,\beta,n}=\sup_D\{(3+\beta(\alpha-1)^{-1} n)|\nabla \varphi_t|^2-\varphi_t(\Delta -2\partial_t)\varphi_t\}$.
Let
\begin{align}\label{restrict}
    &\l(\beta-\frac{\alpha}{4a}\r)c_1^2\geq \beta c_1, \ \ \ \
     \l(\beta-\frac{\alpha}{4a}\r)c_2^2-\beta c_2C_{\varphi, \alpha, \beta, n}\geq 0,\ \  \nonumber\\
     & \l(\beta-\frac{\alpha}{4a}\r)c_3^2-\frac{\alpha}{4b}\geq 0,\ \ \l(\beta-\frac{\alpha}{4a}\r)c_4^2-\alpha^2\geq 0.
\end{align}
Using $$Y_t^2\geq c_1^2t^{-2}+c_2^2\varphi_t^{-4}(X_{T-t}^T)+c_3^2\max\{k_2^2,k_3^2\}+c_4^2k_4,$$
we obtain that the right side of \eqref{estimate-S1-3} is non-positive.
Consider the process $\{X_{T-s}^T\}_{s\in [0,t]}$ with $X_{T-t}^T=x$ and $(x,t)\in D$. Let $\tau(x):=\sup\{ s<t:X_{T-s}^T\notin D, s\geq 0\}$ and $\sup\varnothing =0$.
Since $q_t-\alpha\Delta u_t-nu_tY_t$ converges to $-\infty$ as $t$ tends to $0\vee\tau(x)$, the process $S_{t,\alpha}$ has non-positive drift, i.e.  $S_{t,\alpha}(x)\leq 0$, which implies
\begin{align}\label{estimate-S2-1}
    \frac{|\nabla u|^2}{u^2}-\alpha\frac{\Delta u}{u}\leq n\beta \l(\frac{c_1}{t}+\frac{c_2}{\varphi_{t}^{2}(x)}+c_3\max\{k_2,k_3\}+c_4\sqrt{k_4}+\frac{k_1+ k_4}{\alpha-1}+k_3\r).
\end{align}

Set $a=b=\frac{1}{2\alpha}$ and  $\beta=\frac{\alpha}{2a}=\alpha^2>0$. Let $$C_{\varphi,\alpha,n}:=\sup_D\{(3+\alpha^2(\alpha-1)^{-1} n)|\nabla \varphi_t|^2-\varphi_t(\Delta +2\partial_t)\varphi_t\}.$$ Then, from \eqref{restrict},
we  select $c_1, c_2,c_3,c_4$ such that
% $$ c_1\geq 2,\  c_2\geq 2C_{\varphi,\alpha,\beta,n}, \    c_3\geq 1,\   \mbox{and}\  c_4\geq  \sqrt{{2}{\alpha^{-1}}}.$$
%Choosing $\beta=\frac{\alpha}{2a}>0$ such that $\frac{\beta^2}{\beta-\frac{\alpha}{4a}}$  is minimized, i.e. $\frac{\beta^2}{\beta-\frac{\alpha}{4a}}=\frac{\alpha}{a}$. Then,
%$$\beta c_1\geq \frac{\alpha}{a},\ c_2\beta\geq C_{\varphi,\alpha,\beta,n}\frac{\alpha}{a},\ c_3^2\beta^2\geq \frac{\alpha^2}{4ab}.$$
 %Selecting $c_1, c_2, c_3, c_4$ such that
$$ c_3= 1, c_2=2C_{\varphi,\alpha,n},  c_1= 2,\  c_4=\sqrt{{2}},$$
 we complete the proof by taking these constants $c_1,c_2,c_3, c_4$ into \eqref{estimate-S2-1}.
\end{proof}
\begin{proof}[{\bf Proof of Theorem \ref{local-th}}]When  $D=B_{\rho,T}$, the function $\varphi$ is chosen as  in \eqref{varphi}.
Moreover, let ${\rm Cut}_t(x_0)$ be the set of the $g_t$ cut-locus of $x_0$ on $M$. Since
 the time spent by $X_{T-t}^T$ on $\bigcup_{t\in [0,T]}{\rm Cut}_t(x_0)$  is a.s. zero (see \cite{Ku}), the differential of the brackets $[\varphi_{\cdot}(X_{T-\cdot}^T),u_{\cdot}]_t$ may be taken as 0 at points where $\varphi_t^{-2}$ is not differentiable. Therefore,
\begin{align*}
C_{\varphi,\alpha,n}&=\sup_{B_{\rho,T}}\{(3+\alpha^2(\alpha-1)^{-1} n)|\nabla \varphi_t|^2-\varphi_t(\Delta -2\partial_t)\varphi_t\}\nonumber\\
    &\leq \frac{\pi^2(n+3+\alpha^2(\alpha-1)^{-1} n)}{4\rho^2}+\frac{\pi}{2}(k_1+k_2),
    \end{align*}
combining this with Lemma \ref{Li-Yau-gradient-inequality}, we complete the proof.
\end{proof}
\begin{theorem}[Gradient inequality of Li-Yau type]\label{Li-Yau-gradient-inequality-1}
Let $M$ be a compact manifold.
Assume \eqref{condition-2} holds for some nonnegative constants $k_1,k_2,k_3,k_4$ on $M\times [0,T]$. Let  $u$ be the positive solution of the heat equation \eqref{heat-equ} on $M\times [0,T]$. For any $\alpha>1$, we have
\begin{align}\label{local-li-Yau}
\frac{|\nabla u|^2}{u^2}-\alpha\frac{\Delta u}{u}\leq  n\alpha^2 \bigg(&\frac{2}{t}+\max\{k_2,k_3\}+k_3+\sqrt{2 k_4}+\frac{k_1+ k_4}{\alpha-1}\bigg)
    \end{align}
    holds on $M\times [0,T]$.
\end{theorem}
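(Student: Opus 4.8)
The plan is to run the probabilistic argument of Lemma~\ref{Li-Yau-gradient-inequality} directly on the compact manifold, dropping the spatial cutoff $\varphi$ entirely, i.e. setting the coefficient $c_2 = 0$. One might be tempted to deduce Theorem~\ref{Li-Yau-gradient-inequality-1} by letting $\rho \to \infty$ in Theorem~\ref{local-th}, but since $\varphi_t(x) \to 1$ while the curvature part of the cutoff quantity $C_{\varphi,\alpha,n}$ contributes a $\rho$-independent term proportional to $(k_1+k_2)$, that route leaves a spurious $(k_1+k_2)$-contribution and is not sharp. On a compact manifold the $g_{(T-t)}$-Brownian motion is non-explosive and has no boundary to exit, so no localization is needed and these extra curvature terms are avoided altogether.

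Concretely, I would set $Y_t = c_1 t^{-1} + c_3\max\{k_2,k_3\} + c_4\sqrt{k_4} + (k_1 + (\alpha-1)k_3 + k_4)/(\alpha-1)$, let $h_t$ solve $\dot h_t = h_t Y_t$ with $h_T = 1$, and consider $S_{t,\alpha} = h_t(q_t - \alpha\Delta u_t - n\beta u_t Y_t)$ with $\beta = \alpha^2$. The drift of $S_{t,\alpha}$ is computed exactly as in Lemma~\ref{Li-Yau-gradient-inequality}: combine \eqref{eq3} and \eqref{eq4} for $\vd(h_t q_t)$ with the formula \cite[Lemma 3.2]{Sun11} for $\vd(\Delta u_t)$, and absorb the gradient term $2\sqrt{n}k_4 u_t|\nabla f|$ by the Young-type splitting already used, giving $\alpha n k_4 u_t|\nabla f|^2 + \alpha^{-1}k_4 u_t$. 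The only difference from the lemma is that every bracket $[u, \varphi^{-2}]$ and each $c_{\varphi}$-term drops out, so no $C_{\varphi,\alpha,n}$ appears.

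Next I would invoke the completing-the-square estimate \eqref{esti-hess}--\eqref{s2-1} with $a = b = \frac{1}{2\alpha}$ to bound $|\nabla^2 f|^2 - \alpha\langle\partial_t g_t,\nabla^2 f\rangle - \alpha Y_t\Delta f$ from below by $-\frac{n\alpha}{4a}Y_t^2 - \frac{\alpha n}{4b}\max\{k_2^2,k_3^2\}$. Choosing $\beta = \alpha/(2a) = \alpha^2$ and then $c_1 = 2$, $c_3 = 1$, $c_4 = \sqrt2$ (the cutoff-free analogue of \eqref{restrict}), and using $Y_t^2 \geq c_1^2 t^{-2} + c_3^2\max\{k_2^2,k_3^2\} + c_4^2 k_4$, the drift of $S_{t,\alpha}$ becomes non-positive, so $S_{t,\alpha}$ is a genuine supermartingale: compactness makes $q_t,\Delta u_t, u_t$ bounded and $u_t$ bounded below away from $0$, upgrading the local supermartingale to a true one. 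Finally, running $\{X_{T-s}^T\}_{s\in[0,t]}$ with $X_{T-t}^T = x$, the bracket $q_s - \alpha\Delta u_s - n u_s Y_s \to -\infty$ as $s \to 0$, since $Y_s \sim 2/s \to +\infty$ while $u_s$ stays bounded below; together with the non-positive drift this forces $S_{t,\alpha}(x) \leq 0$. Dividing by $u(x,t)$ and using $(k_1+(\alpha-1)k_3+k_4)/(\alpha-1) = (k_1+k_4)/(\alpha-1) + k_3$ yields exactly \eqref{local-li-Yau}. The main obstacle is the drift computation leading to a non-positive bound: one must verify that with the cutoff removed the four constraints of \eqref{restrict} collapse to the three on $c_1,c_3,c_4$ and are simultaneously satisfiable, and that the Young splitting of the $k_4$-term is compatible with the chosen $\beta = \alpha^2$; everything else is bookkeeping inherited from Lemma~\ref{Li-Yau-gradient-inequality}.
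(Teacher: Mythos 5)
Your proposal is correct and follows essentially the same route as the paper's own proof: the paper likewise drops the cutoff, defines $\tilde{Y}_t=c_1t^{-1}+c_2\max\{k_2,k_3\}+c_3\sqrt{k_4}+k_3+\frac{k_1+k_4}{\alpha-1}$, considers $\tilde{S}_{t,\alpha}=h_t(q_t-\alpha\Delta u_t-n\beta u_t\tilde{Y}_t)$ with $\beta=\alpha^2$, and repeats the drift computation of Lemma \ref{Li-Yau-gradient-inequality} without the bracket and $c_{\varphi}$ terms, the leftover $q_t$-terms being nonpositive exactly because of the constant summand you kept in $Y_t$. The constants $2$, $1$, $\sqrt{2}$ you obtain are the paper's as well, so there is nothing genuinely different here.
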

\begin{proof}
Define $$\tilde{Y}_t=c_1t^{-1}+c_2\max\{k_2,k_3\}+c_3\sqrt{k_4}+k_3+\frac{k_1+k_4}{\alpha-1},$$
where $c_1,c_2,c_3$ are constants, which will be specified later. Let $h_t$ be the solution of $$\dot{h}_t=h_tY_t,\ \ h_T=1.$$
Consider the following process
$$\tilde{S}_{t,\alpha}:=h_t(q_t-\alpha\Delta u_t-n\beta u_t \tilde{Y}_t),$$
where $\beta>0$ will be also specified later. By a similar discussion as in the proof of Lemma \ref{Li-Yau-gradient-inequality}, we obtain
\begin{align*}
\vd \tilde{S}_{t,\alpha}\stackrel{m}{\leq}
 & \l[-nh_tY_t^2u_t\l(\beta-\frac{\alpha}{4a}\r)+\frac{n\alpha}{4b}u_th_t\max\{k_2^2,k_3^2\}\r]\vd t\nonumber\\
&+nu_th_t\l(\beta c_1t^{-2}+\alpha^2 k_4\r)\vd t+(k_1+(\alpha-1) k_3+ k_4)h_tq_t \vd t-(\alpha-1)\dot{h}_tq_t\vd t\nonumber\\
\leq & \l[-nh_tY_t^2u_t\l(\beta-\frac{\alpha}{4a}\r)+\frac{n\alpha}{4b}u_th_t\max\{k_2^2,k_3^2\}\r]\vd t+nu_th_t\l(\beta c_1t^{-2}+\alpha^2 k_4\r)\vd t.
\end{align*}
The  rest part is similar to that of Lemma \ref{Li-Yau-gradient-inequality}, we omit it here.
\end{proof}
\subsection{Li-Yau type inequality with lower order term}
\hspace{0.5cm} Now, we  turn to studying the Li-Yau type inequality with the lower order term. The main result of this subsection is the following.
\begin{theorem}\label{cor2}
   Under the same condition as in Theorem \ref{cor1}. Then, for $t>0$,
\begin{align*}
\frac{|\nabla u|^2}{u^2}-\frac{\Delta u}{u}\leq & \frac{2n}{t}+\frac{8n\pi^2(n+3)}{(4-\pi)^2\rho^2}+\frac{16n\pi(k_1+k_2)}{(4-\pi)^2}+\max\{k_2,k_3\}n+\sqrt{2k_4}n
\\
&+ \bigg[\frac{8\pi n}{(4-\pi)^2\rho}+\sqrt{2n(k_1+k_4)}\bigg]\l\|\frac{|\nabla^{\cdot}u|}{u}\r\|_{B_{\rho,T}}\end{align*}
holds on  $ B_{\rho/2,T}$.
\end{theorem}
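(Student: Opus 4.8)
The plan is to run the probabilistic argument of Lemma~\ref{Li-Yau-gradient-inequality} and Theorem~\ref{local-th} at the \emph{critical} exponent $\alpha=1$, where the Li--Yau constant $n\alpha^2=n$ is sharp but the quantities previously hidden in the $(\alpha-1)$-slack must be paid for by a lower order term. First I would localize to $D=B_{\rho,T}$ with the same cutoff $\varphi(t,y)=\cos\frac{\pi\rho_t(x_0,y)}{2\rho}$ used in the proof of Theorem~\ref{cor1}, and set $L:=\bigl\||\nabla u|/u\bigr\|_{B_{\rho,T}}$, which is finite because $u$ is positive and continuous on the compact set $\overline{B_{\rho,T}}$. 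I would then study
$$
S_t=h_t\bigl(q_t-\Delta u_t-n u_t Y_t\bigr),\qquad \dot h_t=h_tY_t,\ h_T=1,
$$
with $q_t$ as in \eqref{Q-mart}, with $\alpha=1$ (hence $\beta=\alpha^2=1$ and $a=b=\tfrac12$ in \eqref{esti-hess}), and with $Y_t$ \emph{augmented} by two lower order summands,
$$
Y_t=\frac{c_1}{t}+\frac{c_2}{\varphi_t^2(X_{T-t}^T)}+c_3\max\{k_2,k_3\}+c_4\sqrt{k_4}+\Bigl(c_5\sqrt{k_1+k_4}+\frac{c_6}{\varphi_t^2(X_{T-t}^T)}\Bigr)L .
$$

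Next I would expand $\mathrm dS_t$ exactly as in Lemma~\ref{Li-Yau-gradient-inequality}, using \eqref{eq3}--\eqref{eq4} for $\mathrm d(h_tq_t)$ and \cite[Lemma 3.2]{Sun11} for $\mathrm d(\Delta u_t)$. The essential structural change at $\alpha=1$ is that the cancellation coming from $(\alpha-1)\dot h_tq_t$ in \eqref{estimate-S1-2} is lost, so the curvature term $(k_1+k_4)h_tq_t$ survives, and the splitting of the bracket $\mathrm d[u,\varphi_\cdot^{-2}(X_{T-\cdot}^T)]_t$ in \eqref{estimate-S2} (which carried a factor $(\alpha-1)^{-1}$) is no longer available. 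The heart of the matter --- and the step I expect to be most delicate --- is to re-route precisely these two contributions through the global bound $|\nabla f|=|\nabla u|/u\le L$: the surviving curvature term is estimated by $(k_1+k_4)q_t\le (k_1+k_4)u_tL^2$, and the bracket by $2nc_2h_t\varphi_t^{-3}\langle\nabla u_t,\nabla\varphi_t\rangle\le 2nc_2h_t\varphi_t^{-3}|\nabla\varphi_t|\,u_tL$. For the first order term $-2\sqrt n\,k_4u_t|\nabla f|$ produced by $\mathrm d(\Delta u_t)$ I would apply Young's inequality with weight tuned to $k_4$ rather than to $nk_4$, namely $-2\sqrt n\,k_4u_t|\nabla f|\ge -k_4u_t|\nabla f|^2-nk_4u_t$, so that the quadratic remainder $k_4q_t$ merges into $(k_1+k_4)q_t$ while the constant remainder $nk_4u_t$ is kept apart.

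After these substitutions $\mathrm dS_t$ takes the form $-\tfrac n2 h_tu_tY_t^2\,\mathrm dt$ plus nonnegative drift terms proportional to $c_1t^{-2}$, $c_2C_\varphi\varphi_t^{-4}$, $\max\{k_2^2,k_3^2\}$, $nk_4$, $(k_1+k_4)L^2$ and $\varphi_t^{-3}|\nabla\varphi_t|L$, where now $C_\varphi=\sup_D\{3|\nabla\varphi_t|^2-\varphi_t(\Delta-2\partial_t)\varphi_t\}$ carries no $\alpha^2n/(\alpha-1)$ term. Since $Y_t$ is a sum of nonnegative terms, $Y_t^2$ dominates the square of each summand, and I would complete the square summand by summand --- as in \eqref{restrict} --- choosing $c_1=2$, $c_2=2C_\varphi$, $c_3=1$, $c_4=\sqrt2$, $c_5=\sqrt{2/n}$ and $c_6=\pi/(2\rho)$ (the last value emerging from the completion of the bracket term, using $\varphi_t\le1$), which renders the whole drift non-positive on $D$. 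The blow-up argument of Lemma~\ref{Li-Yau-gradient-inequality} then applies verbatim: running $\{X_{T-s}^T\}_{s\in[0,t]}$ to $x$ at time $t$, the quantity $q_s-\Delta u_s-nu_sY_s$ tends to $-\infty$ as $s\to 0\vee\tau(x)$, and non-positivity of the drift forces $S_t\le0$, i.e. $\frac{|\nabla u|^2}{u^2}-\frac{\Delta u}{u}\le nY_t$ on $D$. Finally, specializing $D=B_{\rho,T}$ and inserting $\varphi_t(x)\ge 1-\pi/4$, $|\nabla\varphi_t|\le\pi/(2\rho)$ and the index-lemma bound on $-(\Delta-2\partial_t)\varphi_t$ from the proof of Theorem~\ref{cor1} turns $nY_t$ into the asserted constants; in particular $nc_5\sqrt{k_1+k_4}\,L=\sqrt{2n(k_1+k_4)}\,L$ and $nc_6\varphi_t^{-2}(x)L$ yields the $\tfrac{8n\pi}{(4-\pi)^2\rho}L$ term, completing the proof.
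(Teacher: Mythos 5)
Your overall strategy is the paper's own: the paper proves Theorem \ref{cor2} via Lemma \ref{Li-Yau-lower-order}, which handles exactly the two obstructions you identify at $\alpha=1$ --- the surviving curvature term $(k_1+k_4)q_t$ and the cutoff bracket --- by the same re-routing through $L=\||\nabla u|/u\|_{D}$ (namely $q_t\le u_tL^2$ and $|\nabla u_t|\le u_tL$), and your Young-inequality step for the first-order $k_4$ term reproduces the paper's split into $(k_1+k_4)q_t+nk_4u_t$. The packaging differs only cosmetically: the paper works with the unweighted process $\tilde S_t=q_t-\Delta u_t-nu_t\tilde Z_t$ and a sign/Gronwall drift argument, putting the lower-order quantities into the constant $c_3$ and into an enlarged $c_2$, whereas you keep the $h_t$-weight of Lemma \ref{Li-Yau-gradient-inequality} and put them into the extra summands $c_5\sqrt{k_1+k_4}\,L$ and $c_6L\varphi_t^{-2}$ of $Y_t$.

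However, your constant bookkeeping has a genuine hole. The new summand $c_6L\varphi_t^{-2}(X_{T-t}^T)$ in $Y_t$ is itself a semimartingale: differentiating $-nu_th_tY_t$, it generates its own It\^o drift (of order $c_6L\,c_\varphi\varphi_t^{-4}u_th_t$) and its own covariation with $u_t$ (of order $c_6L\,\varphi_t^{-3}|\nabla\varphi_t|\,u_tL\,h_t$), and neither appears in your list of drift terms --- you track the bracket only with coefficient $c_2$. Once these are included, the $\varphi^{-4}$-scaled part of the drift is bounded by $n(c_2+c_6L)\bigl(c_\varphi+\tfrac{\pi}{\rho}L\bigr)\varphi_t^{-4}u_th_t$, and since cross terms of $Y_t^2$ involving the non-$\varphi$ summands cannot dominate $\varphi^{-4}$-terms near $\partial D$, the completion of squares forces $c_2+c_6L\ge 2c_\varphi+\tfrac{2\pi}{\rho}L$ pointwise, i.e. $c_6\ge 2\pi/\rho$ once $c_2=2C_\varphi$; indeed the bracket produced by the $c_6$-summand alone already requires $\tfrac12 c_6^2\ge\tfrac{\pi}{\rho}c_6$. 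Your choice $c_6=\pi/(2\rho)$ is therefore a factor $4$ too small, the drift is not non-positive, and $S_t\le 0$ does not follow. With the workable value $c_6=2\pi/\rho$ your argument closes but yields $\tfrac{32\pi n}{(4-\pi)^2\rho}L$ in place of the stated $\tfrac{8\pi n}{(4-\pi)^2\rho}L$. To be fair, this is exactly what the paper's own Lemma \ref{Li-Yau-lower-order} gives as well, since $4n\|\varphi_\cdot\nabla^\cdot\varphi_\cdot\|_{D}\varphi_t^{-2}(x)\le 4n\cdot\tfrac{\pi}{2\rho}\cdot\tfrac{16}{(4-\pi)^2}$ by \eqref{estimate-constant-3} and \eqref{estimate-constant-1}, so the coefficient $8$ in Theorem \ref{cor2} is not reproduced by the paper's proof either; but you should not present $c_6=\pi/(2\rho)$ as ``emerging from the completion of the bracket term,'' since for that value the completion fails.
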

To prove this theorem, we first give a more general result.
\begin{lemma}[Local Li-Yau type with lower order term]\label{Li-Yau-lower-order}
 Under the same condition as in Lemma \ref{local-gradient-estimate}.  Then,
 for  $t>0$,
\begin{align*}%\label{local-Li-Yau-inequality-with-lower-order-term}
\frac{|\nabla u|^2}{u^2}-\frac{\Delta u}{u}\leq & \frac{2n}{t}+\frac{2n\sup_{D}\{3|\nabla \varphi_t|^2-\varphi_t(\Delta -2\partial_t)\varphi_t\}}{\varphi_{t}^2(x)}+\max\{k_2,k_3\}n+\sqrt{2k_4}n
\nonumber \\
&+\bigg(4n\frac{\|\varphi_{\cdot}\nabla^{\cdot}\varphi_{\cdot}\|_{D}}{\varphi_{t}^2(x)}+\sqrt{2n(k_1+k_4)}\bigg)\l\|\frac{|\nabla^{\cdot}u|}{u}\r\|_{D}
\end{align*}
holds on $D$.
\end{lemma}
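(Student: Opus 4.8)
The plan is to follow verbatim the construction in the proof of Lemma \ref{Li-Yau-gradient-inequality}, specialized to the borderline exponent $\alpha=1$, where the singular factor $(\alpha-1)^{-1}$ used there is no longer available and must be rerouted. Write $f=\log u$, let $q_t$ be as in \eqref{Q-mart}, and abbreviate $N:=\l\|\frac{|\nabla^{\cdot}u|}{u}\r\|_{D}$, the finite quantity appearing in the statement (recall $|\nabla f|=|\nabla u|/u$). With $\alpha=1$ one is forced to take $\beta=\alpha^2=1$ and $a=b=\frac{1}{2\alpha}=\frac12$, so that $\beta-\frac{\alpha}{4a}=\frac12$. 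I would introduce
\[
Y_t=\frac{c_1}{t}+\frac{c_2}{\varphi_t^2(X_{T-t}^T)}+c_3\max\{k_2,k_3\}+c_4\sqrt{k_4}+L ,
\]
let $h_t$ solve $\dot h_t=h_tY_t$, $h_T=1$, and study $S_t:=h_t(q_t-\Delta u_t-nu_tY_t)$ exactly as before; here $c_1,\dots,c_4>0$ and $L>0$ are to be fixed, and the constant $L$ plays the role of the now-forbidden summand $\frac{k_1+k_4}{\alpha-1}$.

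Running the It\^o computation of that lemma (using \eqref{eq3}, \eqref{eq4} and \cite[Lemma 3.2]{Sun11}) with $\alpha=\beta=1$ reproduces every term, and only the two places that exploited $(\alpha-1)^{-1}$ need new treatment. First, the quadratic curvature term, which in \eqref{estimate-S1-2} enters as $(k_1+(\alpha-1)k_3+k_4)h_tq_t$ and equals $(k_1+k_4)h_tq_t$ at $\alpha=1$, can no longer be cancelled by $-(\alpha-1)\dot h_tq_t$ together with the localization bracket; instead I would bound it by its size, $q_t=u_t|\nabla f_t|^2\le u_tN^2$, giving the constant contribution $(k_1+k_4)h_tu_tN^2$. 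Second, the localization bracket $-nc_2h_t\,\vd[u,\varphi_{\cdot}^{-2}(X_{T-\cdot}^T)]_t=2nc_2h_t\varphi_t^{-3}\l<\nabla u_t,\nabla\varphi_t\r>\vd t$, previously Young-split into a $q_t$ term (the origin of the $\beta(\alpha-1)^{-1}n|\nabla\varphi_t|^2$ inside $C_{\varphi,\alpha,\beta,n}$), I would instead keep linear: estimating $\l<\nabla u_t,\nabla\varphi_t\r>\le u_tN|\nabla\varphi_t|$ and $\varphi_t^{-3}|\nabla\varphi_t|=\varphi_t^{-4}\,\varphi_t|\nabla\varphi_t|\le\varphi_t^{-4}\|\varphi_{\cdot}\nabla^{\cdot}\varphi_{\cdot}\|_{D}$, this term is at most $2nc_2\|\varphi_{\cdot}\nabla^{\cdot}\varphi_{\cdot}\|_{D}\,N\,h_tu_t\varphi_t^{-4}$.

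With these substitutions the drift of $S_t$ takes the familiar shape: the good term $-\frac n2 h_tu_tY_t^2$ against bad terms proportional to $t^{-2}$, $\varphi_t^{-4}$, $\max\{k_2^2,k_3^2\}$, $k_4$, and (now) $N^2$. Using $Y_t^2\ge c_1^2t^{-2}+c_2^2\varphi_t^{-4}+c_3^2\max\{k_2^2,k_3^2\}+c_4^2k_4+L^2$, nonpositivity of the drift reduces to
\[
\tfrac12 c_1^2\ge c_1,\qquad \tfrac12 c_2\ge C_{\varphi}+2\|\varphi_{\cdot}\nabla^{\cdot}\varphi_{\cdot}\|_{D}\,N,\qquad \tfrac12 c_3^2\ge\tfrac12,\qquad \tfrac12 c_4^2\ge1,\qquad \tfrac n2 L^2\ge (k_1+k_4)N^2,
\]
where $C_{\varphi}:=\sup_D\{3|\nabla\varphi_t|^2-\varphi_t(\Delta-2\partial_t)\varphi_t\}$. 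I would then take $c_1=2$, $c_3=1$, $c_4=\sqrt2$, $c_2=2C_{\varphi}+4\|\varphi_{\cdot}\nabla^{\cdot}\varphi_{\cdot}\|_{D}N$ and $L=N\sqrt{2(k_1+k_4)/n}$, making the drift $\le0$. As in Lemma \ref{Li-Yau-gradient-inequality}, running $\{X_{T-s}^T\}_{s\in[0,t]}$ from $x$ at time $t$ and noting that $q_s-\Delta u_s-nu_sY_s\to-\infty$ as $s\to 0\vee\tau(x)$ forces $S_t(x)\le0$, i.e. $\frac{|\nabla u|^2}{u^2}-\frac{\Delta u}{u}\le nY_t(x)$; inserting the chosen constants yields precisely the stated bound.

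The delicate point I would check carefully is that the bracket term must remain \emph{linear} in $c_2$. The bad $\varphi_t^{-4}$ contribution $2nc_2\|\varphi_{\cdot}\nabla^{\cdot}\varphi_{\cdot}\|_{D}N\,h_tu_t\varphi_t^{-4}$ is of first order in $c_2$, whereas the compensating control $-\frac n2 h_tu_tc_2^2\varphi_t^{-4}$ is quadratic, so the absorption condition is the linear inequality $\frac12 c_2\ge C_{\varphi}+2\|\varphi_{\cdot}\nabla^{\cdot}\varphi_{\cdot}\|_{D}N$; enlarging $c_2$ by $4\|\varphi_{\cdot}\nabla^{\cdot}\varphi_{\cdot}\|_{D}N$ then produces a final term $nc_2\varphi_t^{-2}(x)$ that is linear—not merely H\"older—in both $N$ and $\|\varphi_{\cdot}\nabla^{\cdot}\varphi_{\cdot}\|_{D}$, matching $4n\|\varphi_{\cdot}\nabla^{\cdot}\varphi_{\cdot}\|_{D}\varphi_t^{-2}(x)N$. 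Had one instead introduced a new summand of $Y_t$ to absorb the bracket through $Y_t^2$, the square-root structure of that absorption would destroy this linearity; this is the essential obstacle, and the reason the estimate carries the explicit first-order factor $N$ rather than a purely geometric bound.
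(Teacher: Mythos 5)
Your proposal is correct and takes essentially the same approach as the paper: the paper's own proof of this lemma likewise reruns the computation of Lemma \ref{Li-Yau-gradient-inequality} at the borderline exponent, bounds the curvature term $(k_1+k_4)q_t$ by $(k_1+k_4)u_t\|\,|\nabla^{\cdot}u|/u\,\|_D^2$, keeps the localization bracket linear via $\varphi_t^{-3}|\nabla\varphi_t|\leq\varphi_t^{-4}\|\varphi_{\cdot}\nabla^{\cdot}\varphi_{\cdot}\|_D$, and arrives at your exact constants ($a=b=\tfrac12$, $c_1=2$, $c_2=2c_{\varphi}+4\|\varphi_{\cdot}\nabla^{\cdot}\varphi_{\cdot}\|_D\,\|\,|\nabla^{\cdot}u|/u\,\|_D$, constant part $\max\{k_2,k_3\}+\sqrt{2k_4}+\sqrt{2(k_1+k_4)/n}\,\|\,|\nabla^{\cdot}u|/u\,\|_D$) together with the same boundary argument. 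The only cosmetic difference is that you retain the integrating factor $h_t$ and obtain an unconditionally nonpositive drift from $Y_t^2\geq c_1^2t^{-2}+c_2^2\varphi_t^{-4}+\cdots$, whereas the paper drops $h_t$, works with $\tilde{S}_t$ directly, and factors the drift as $-\frac{a\tilde{S}_t}{nu_t}\left(\frac{|\nabla u_t|^2}{u_t}-\Delta u_t+nu_t\tilde{Z}_t\right)$, nonpositive on $\{\tilde{S}_t\geq 0\}$.
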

\begin{proof}
Let $$\tilde{S}_t=\frac{|\nabla u_t|^2}{u_t}-\Delta u_t-nu_t \tilde{Z}_t,$$ where $\tilde{Z}_t=c_1t^{-1}+c_2\varphi_t^{-2}(X_{T-t}^T)+c_3$ with constants $c_1,c_2,c_3>0$ to be specified later. Let
 $$c_{\varphi}:=\sup_{D}\{3|\nabla \varphi_t|^2-\varphi_t(\Delta -2\partial_t)\varphi_t\}.$$
 By a similar calculation as in the proof of Lemma \ref{Li-Yau-gradient-inequality},
 we have
 \begin{align*}
    \vd \tilde{S}_t\stackrel{m}{\leq} &\l[-\frac{a}{nu_t}\l(\frac{|\nabla u_t|^2}{u_t}-\Delta u_t\r)^2+(k_1+k_4)\frac{|\nabla u_t|^2}{u_t}+\frac{n}{4b}u_t \max\{k_2^2,k_3^2\}+nu_tk_4\r]\vd t\\
    &+nu_t (c_1t^{-2}+c_2c_{\varphi}\varphi_t^{-4}(X_{T-t}^T))\vd t-n c_2 \vd [\varphi_{\cdot}^{-2}(X_{T-{\cdot}}^T), u]_t,
 \end{align*}
 where $a+b=1$.
 Since
 \begin{align*}
    -n c_2\vd [\varphi_t^{-2}(X_{T-t}^T), u]_t=&2n  c_2\varphi_t^{-4}(X_{T-t}^{T})\l<\frac{\nabla  u_t}{u_t}, \varphi_t\nabla  \varphi_t(X_{T-t}^T)\r>u_t\vd t\\
    \leq & 2nc_2u_t \l \|\frac{|\nabla^{\cdot}u|}{u}\r\|_{D}\|\varphi_{\cdot}\nabla^{\cdot}\varphi_{\cdot}\|_{D}\varphi_t^{-4}(X_{T-t}^T)\vd t,
 \end{align*}
 we arrive at
 \begin{align*}
 \vd \tilde{S}_t\stackrel{m}{\leq} & \bigg\{-\frac{a}{nu_t}\l(\frac{|\nabla u_t|^2}{u_t}-\Delta u_t\r)^2+nu_t\bigg[\frac{ c_1}{t^2}+\frac{ c_2}{\varphi_t^4(X_{T-t}^T)}\times\bigg(c_{\varphi}+2\l\|\frac{|\nabla^{\cdot}u|}{u}\r\|_{D}\|\varphi_{\cdot}\nabla^{\cdot}\varphi_{\cdot}\|_{D}\bigg)\bigg]\bigg\}\vd t\\
 &+nu_t\frac{k_1+k_4}{n}\l\|\frac{|\nabla^{\cdot}u|}{u}\r\|^2_{D}\vd t+\frac{1}{4b}nu_t\max\{k_2^2,k_3^2\}\vd t+nu_tk_4\vd t.
 \end{align*}
Then, letting $c_1=\frac{1}{a}$, $c_2=\frac{1}{a} \l[c_{\varphi}+2\l\|\frac{|\nabla^{\cdot}u|}{u}\r\|_{D}\|\varphi_{\cdot}\nabla^{(T-{\cdot})}\varphi_{\cdot}\|_{D}\r]$, and
$$c_3=\sqrt{\frac{k_1+k_4}{a n}}\l\|\frac{|\nabla^{\cdot} u|}{u}\r\|_{D}+\sqrt{\frac{1}{4ab}}\max\{k_2,k_3\}+\sqrt{\frac{k_4}{a}},$$
we obtain
$$ \vd \tilde{S}_t\leq-\frac{a\tilde{S}_t}{nu_t}\l(\frac{|\nabla u_t|^2}{u_t}-\Delta u_t+nu_tZ_t\r)\vd t.$$
Then, on $\{\tilde{S}_t\geq 0\}$, the process $\tilde{S}_t$ has nonpositive drift.
 Consider $\{X_{T-s}^T\}_{s\in [0,t]}$ starting from $x$ at $s=t$. Since $\tilde{S}_t$ goes to $-\infty$ as $s\rightarrow 0\wedge \tau(x)$. We have $\tilde{S}_t\leq 0$.  Now, setting $a=b=\frac{1}{2}$, we complete the proof.
\end{proof}
\begin{proof}[{\bf Proof of Theorem \ref{cor2}}]
When  $D=B_{\rho,T}$, the function $\varphi$ is chosen  in \eqref{varphi}. Since
  \begin{align}
    c_{\varphi}&=\sup_{B_{\rho,T}}\{3|\nabla \varphi_t|^2-\varphi_t(\Delta -2\partial_t)\varphi_t\}\leq \frac{\pi^2(n+3)}{4\rho^2}+\frac{\pi}{2}(k_1+k_2),\label{const-1}
 \end{align}
 combining this with \eqref{estimate-constant-3} and \eqref{local-li-Yau}, we complete the proof.
\end{proof}

\section{Applications to Ricci flow}
\hspace{0.5cm}
%From technical viewpoint, the method is essentially due to \cite{AT}. The key to deal with these problems is how to construct suitable submartingales.
 %Note that, compared with previous work, we  simplify the proof and  obtain the gradient inequalities more  explicitly.
% Denote $\|f\|_D$ stands for $\sup_{D}|f(x,t)|$, where $D$ is a subset of $M\times [0,T]$. The following formulas for solutions of the heat equation on a  manifold without boundary depend on the fact that the solutions are strictly positive.
%When $\mathcal{R}_t\equiv 0$, then $(g_t)_{t\in [0,T]}$ is a solution of  a Ricci flow. Here and in what follows, the Ricci flow will mean (probabilistic convention):
%\begin{align}\label{Ricci-flow}
%\frac{\partial}{\partial t}g(x,t)=-{\rm Ric}(x,t), \ \  (x,t)\in M\times [0,T].
%\end{align}
In this section, we apply our results to the special case of the Ricci flow \eqref{Ricci-flow}.
 In this case, when $M$ is compact,
Theorem \ref{Hamilton-1}  reduces to \cite{CH09, Zhang06}. That is
$$\frac{|\nabla u|}{u}\leq \sqrt{\frac{2}{t}\log \frac{\|u\|_{M\times [0,T]}}{u}}.$$
{\bf Remark.}\ When the manifold $M$ carries boundary, we turn to consider the system as follows:
 for $\lambda\geq 0$, \begin{equation}\label{Ricci-flow-boundary}
    \begin{cases}
\frac{\partial}{\partial t}g(x,t)=-{\rm Ric}(x,t), \  & \  \ (x,t)\in M\times [0,T];\\
\mathbb{I}_t=\lambda,\ & \ \ x\in \partial M,
\end{cases}
 \end{equation}
 where $\mathbb{I}_t$ is the second fundamental form w.r.t. the metric $g_t$.
 Shen \cite{Shen} give the proof of the short time exitance of the solution.  Suppose a smooth positive function $u:M\times [0,T]\rightarrow \mathbb{R}$ satisfies the heat equation
\begin{equation}\label{heat-equ-boundary}
\begin{cases}
  \frac{\partial}{\partial t}u(x,t)=\frac{1}{2}\Delta u(x,t),&\ \ (x,t)\in M\times [0,T];\\
  \frac{\partial}{\partial \nu}u(x,t)=0,\ &\ \ x\in \partial M,
  \end{cases}
\end{equation}
where $\frac{\partial}{\partial \nu}$ is the inward unit normal vector field of the boundary associated with $g_t$. Let $X_t^T$ be a $g_{(T-t)}$-Brownian motion and $P_{s,t}$ be the associated semigroup (see \cite{cheng}). Since $\mathbb{I}_t=\lambda\geq 0$ and $\frac{\partial}{\partial \nu}P_{T-t,T}f=0$ (see \cite[Theorem\ 2.1(2)]{cheng}),  the process  $H_t$, constructed as in Lemma \ref{lem2} by using the reflecting $g_{(T-t)}$-Brownian motion,   is also a submartingale for the system \eqref{Ricci-flow-boundary}-\eqref{heat-equ-boundary}. Then, the conclusion in Theorem \ref{Hamilton-1} still holds true. \medskip

Next, we consider the Li-Yau type gradient inequality on a compact manifold carrying the Ricci flow.
\begin{theorem}\label{Ricci-compact}
Suppose the manifold $M$ is compact and $0\leq {\rm Ric}_t\leq k$, $t\in (0,T]$. Let $(M,g_t)_{t\in [0,T]}$ be a solution of the Ricci flow.  Assume a smooth positive function $u:M\times [0,T]\rightarrow \mathbb{R}$ satisfies the heat equation \eqref{heat-equ}. Then the following estimates
\begin{align}\label{compact-e1}
    \frac{|\nabla  u|^2}{u^2}-\frac{\Delta u}{u}\leq kn+\frac{2n}{t}
\end{align}
hold for all $M\times (0,T]$.
\end{theorem}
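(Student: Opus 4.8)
The plan is to realize the process $\hat S_t$ of \eqref{mart-1} as a supermartingale and read off the bound from its sign, exactly as announced in the introduction. Fix a (non-explosive) $g_{(T-t)}$-Brownian motion $(X_t^T)$, write $f=\log u$, $u_t=u(X^T_{T-t},t)$ and take $q_t$ as in \eqref{Q-mart}, and set $Y_t=c_1t^{-1}+c_2k$ with $c_1,c_2>0$ to be chosen and $h_t$ solving $\dot h_t=h_tY_t$, $h_T=1$. Consider
$$\hat S_t=h_t\bigl(q_t-\Delta u_t-nu_tY_t\bigr).$$
The essential point, and the reason the critical exponent $\alpha=1$ is attainable here while Theorem \ref{Li-Yau-gradient-inequality-1} needs $\alpha>1$ because of the term $\frac{k_1+k_4}{\alpha-1}$, is that the Ricci flow provides two cancellations: first $\mathcal R_t={\rm Ric}_t+\partial_tg_t=0$, and second, by the contracted second Bianchi identity, the first-order coefficient ${\rm div}(\partial_tg_t)-\frac12\nabla\,{\rm tr}_g(\partial_tg_t)=-{\rm div}({\rm Ric}_t)+\frac12\nabla R=0$.

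I would then compute the drift of $\hat S_t$ by It\^o's formula, using that $u_t$ is a local martingale. For the $q_t$-term I invoke \eqref{eq3}, which under $\mathcal R_t=0$ reduces to $\bigl(\frac12\Delta-\partial_t\bigr)q=u|\nabla^2f|^2$, so $q_t$ contributes the drift $\dot h_tq_t-h_tu_t|\nabla^2f|^2$. For the $\Delta u_t$-term I use \cite[Lemma 3.2]{Sun11}; after the Bianchi cancellation and $\partial_tg_t=-{\rm Ric}_t$ its drift collapses to $u_t\langle{\rm Ric}_t,\nabla^2f\rangle+u_t{\rm Ric}_t(\nabla f,\nabla f)$. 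Using $\dot h_t=h_tY_t$ and the identity $\frac{q_t-\Delta u_t}{u_t}=-\Delta f$, and letting the $Y_t^2$-contributions cancel, the drift of $\hat S_t$ becomes $h_tu_t$ times
$$-Y_t\Delta f-|\nabla^2f|^2-\langle{\rm Ric}_t,\nabla^2f\rangle-{\rm Ric}_t(\nabla f,\nabla f)-nY_t^2+nc_1t^{-2}.$$

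The remaining estimate is algebraic. Splitting $a+b=1$ and completing the square, I bound $|\nabla^2f|^2+\langle{\rm Ric}_t,\nabla^2f\rangle\ge a|\nabla^2f|^2-\frac{1}{4b}|{\rm Ric}_t|^2\ge\frac an(\Delta f)^2-\frac1{4b}|{\rm Ric}_t|^2$ by \eqref{eq4}; here the hypothesis ${\rm Ric}_t\ge0$ lets me discard $-{\rm Ric}_t(\nabla f,\nabla f)\le0$ and ${\rm Ric}_t\le k$ gives $|{\rm Ric}_t|^2\le nk^2$. Maximizing the quadratic $-\frac an(\Delta f)^2-Y_t\Delta f$ over $\Delta f$ leaves $\frac{nY_t^2}{4a}+\frac{nk^2}{4b}-nY_t^2+nc_1t^{-2}$, which for $a=b=\frac12$ equals $-\frac n2Y_t^2+\frac{nk^2}2+nc_1t^{-2}$. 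Inserting $Y_t=c_1t^{-1}+c_2k$, the $t^{-2}$-coefficient is $n\bigl(c_1-\frac{c_1^2}2\bigr)$, the constant one is $\frac{nk^2}2(1-c_2^2)$, and the $t^{-1}$-coefficient is $-nc_1c_2k\le0$; choosing $c_1=2$, $c_2=1$ makes the first two vanish, so the whole drift is $\le0$ and $\hat S_t$ is a supermartingale. Finally, since $q_t-\Delta u_t-nu_tY_t\to-\infty$ as $t\to0$ (as $Y_t\sim2/t$ while $u_t$ stays positive and $q_t,\Delta u_t$ stay bounded on the compact $M$), the nonpositive drift forces $\hat S_t\le0$; dividing by $u_t>0$ and using $\frac{q_t-\Delta u_t}{u_t}=-\Delta f=\frac{|\nabla u|^2}{u^2}-\frac{\Delta u}{u}$ gives the claim with $nY_t=\frac{2n}t+nk$.

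The main obstacle is precisely this drift computation at the critical exponent $\alpha=1$. It succeeds only because both the Bochner remainder $\mathcal R_t$ and the first-order Bianchi term vanish under the Ricci flow, and because the completion of square must simultaneously absorb the cross term $\langle{\rm Ric}_t,\nabla^2f\rangle$ and retain enough of $(\Delta f)^2$ to dominate the linear term $-Y_t\Delta f$; verifying that the single choice $a=b=\frac12$, $c_1=2$, $c_2=1$ closes all three sign conditions is the delicate quantitative step, and correctly tracking the time-dependence of $\Delta$ through \cite[Lemma 3.2]{Sun11} is the delicate structural one.
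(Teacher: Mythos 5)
Your proof is correct and is essentially the paper's own argument: the paper's proof of Theorem \ref{Ricci-compact} just invokes the process $\hat S_t$ of \eqref{s-t}, the cancellations ${\mathcal R}_t=0$ and ${\rm div}(\partial_tg_t)-\frac{1}{2}\nabla({\rm tr}_g(\partial_tg_t))=0$, and ``a similar discussion as in Theorem \ref{Li-Yau-gradient-inequality-1}'', which is exactly the drift computation, completion of squares with $a=b=\frac{1}{2}$, and choice $c_1=2$, $c_2=1$ that you carry out (the paper's remark after the theorem confirms these constants). You merely supply the details the paper omits, including the correct use of $0\leq{\rm Ric}_t\leq k$ to discard the $(\nabla f,\nabla f)$-term and bound $|{\rm Ric}_t|^2\leq nk^2$, and the $t\to 0$ blow-up argument closing the supermartingale conclusion.
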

\begin{proof}
%. We want to investigate the martingale property of it as in the proof of  Lemma \ref{Li-Yau-gradient-inequality}.
Note that when $\{g_t\}_{t\in [0,T]}$ is a Ricci flow, the second Bianchi identity says that
$${\rm div}(\partial_tg_t)-\frac{1}{2}\nabla ({\rm tr}_g (\partial_tg_t))=0.$$
So we do not need the condition $|\nabla h|\leq k_4$ to deal with $\vd (\Delta u_t)$.  Consider the  process $\hat{S}_t$ constructed in \eqref{s-t}.
 By a similar discussion as in the proof of Theorem  \ref{Li-Yau-gradient-inequality-1}, we obtain \eqref{compact-e1} directly.  Here, we omit the details.
\end{proof}
\noindent{\bf Remark.}
(1)\ In \cite{BCP}, the authors gave the  Li-Yau type gradient estimate on the compact manifold as follows,
   $$ \frac{|\nabla  u|^2}{u^2}-\frac{\Delta u}{u}\leq 2kn+\frac{n}{t}$$ by using the maximal principle, which  has a little difference from  our result. \\
   (2)\ We claim that \eqref{compact-e1} also holds for the system \eqref{Ricci-flow-boundary}-\eqref{heat-equ-boundary}. Since
   $$\frac{\partial}{\partial \nu} \hat{S}_t=h_t \frac{\partial}{\partial \nu}\l(\frac{|\nabla u|^2}{u}- \Delta u\r)=-2 h_t\frac{1}{u^2}\mathbb{I}_t(\nabla u,\nabla u)\leq 0,$$
   where the second equality is ensured by the proof of \cite[Theorem 3.4]{BCP},
   we conclude  that  $\hat{S}_t$ for $c_1=2, c_2=1$ is also a submartingale, which lead to the  assertion announced above.

    \medskip
%\end{enumerate}
%\begin{remark}\label{rem-boundary}
%Consider the system \eqref{Ricci-flow-boundary}-\eqref{heat-equ-boundary} for the manifold with boundary, $(X_t^T)$ is a reflecting $g(T-t)$-Brownian motion. We claim that $H_t$ constructed in Lemma \ref{lem2} is still a local submartingale.  So the following discussion in this section also holds
%for  the system \eqref{Ricci-flow-boundary}-\eqref{heat-equ-boundary}.
%\end{remark}

When the manifold $M$ is noncompact, we present the local version of the gradient estimates under the Ricci flow.
%Before of this, some pieces of notations should be
%introduced at this point.
%We obtain the following results from Theorem \ref{cor1} and Theorem \ref{local-li-Yau} directly.
\begin{theorem}\label{Ricci-noncompact}
Suppose $(M,g_t)_{t\in [0,T]}$ is a complete solution of the Ricci flow \eqref{Ricci-flow}. Assume that $|{\rm Ric}_t|\leq k$ for some $k>0$ and all $(x,t)\in B_{\rho,T}$, and a smooth positive function $u:M\times [0,T]\rightarrow \mathbb{R}$ satisfies the heat equation \eqref{heat-equ}. Then for $\alpha>1$ and $t>0$,
\begin{align*}
    & \l|\frac{\nabla  u}{u}\r|^2\leq 2\l(\frac{1}{t}+\frac{4\pi^2(n+7)}{(4-\pi)^2\rho^2}+\frac{8k\pi}{(4-\pi)^2}\r)\l(4+\log \frac{\|u\|_{B_{\rho,T}}}{u}\r),%\\
    %& \l|\frac{\nabla  u}{u}\r|^2-\frac{\Delta  u}{u}\leq \frac{2n}{t}+\frac{8n\pi^2(n+3)}{(4-\pi)^2 \rho^2}+\l(\frac{4+\pi}{4-\pi}\r)^2nk+\l(\frac{32n\pi}{(4-\pi)^2\rho}+\sqrt{2nk}\r)\l\|\frac{|\nabla^{\cdot}u|}{u}\r\|_{B_{\rho,T}}
\end{align*}
and
\begin{equation*}
    \frac{|\nabla  u|^2}{u^2}-\alpha\frac{\Delta  u}{u}\leq \frac{2\alpha^2n}{t}+\frac{8\alpha^2 n\pi^2}{(4-\pi)^2}\frac{n(1+{\alpha^2}{(\alpha-1)^{-1}})+3}{\rho^2}+\l(\frac{4+\pi}{4-\pi}\r)^2\alpha^2 kn+\frac{\alpha^3 kn}{\alpha-1}
\end{equation*}
hold on $B_{\frac{\rho}{2},T}$.
\end{theorem}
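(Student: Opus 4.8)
The plan is to obtain both displayed inequalities by specializing the general local estimates already proved—Theorem \ref{cor1} for the Hamilton type bound and Theorem \ref{local-th} for the Li-Yau type bound—to the Ricci flow, so that essentially no new analysis is required. The first step is to translate the single hypothesis $|{\rm Ric}_t|\leq k$ into the several curvature constants appearing in \eqref{condition-1} and \eqref{condition-2}. Because the probabilistic convention \eqref{Ricci-flow} gives $\partial_tg_t=-{\rm Ric}_t$, the bound $|{\rm Ric}_t|\leq k$ on $B_{\rho,T}$ yields at once ${\rm Ric}_t\geq -k$ together with $-k\leq\partial_tg_t\leq k$; hence I would take $k_1=k_2=k_3=k$.

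For the Hamilton type inequality I would simply invoke Theorem \ref{cor1} with $k_1=k_2=k$, whose hypotheses are exactly the two bounds just extracted. The conclusion then follows after substituting $k_1+k_2=2k$ and collecting the curvature-dependent constant.

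The Li-Yau type inequality is where the special structure of the Ricci flow must be used. A direct appeal to Theorem \ref{local-th} would also demand a gradient bound $|\nabla(\partial_tg_t)|\leq k_4$, which is not among our hypotheses. The essential observation—already exploited in the proof of Theorem \ref{Ricci-compact}—is that along the Ricci flow the contracted second Bianchi identity gives ${\rm div}(\partial_tg_t)-\frac{1}{2}\nabla({\rm tr}_g(\partial_tg_t))=0$. Consequently, in the estimate for $\vd(\Delta u_t)$ used inside the proof of Lemma \ref{Li-Yau-gradient-inequality} (the step drawn from \cite[Lemma 3.2]{Sun11}), the single term that previously had to be controlled by $k_4$ vanishes identically. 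I would therefore re-run that argument verbatim with this term omitted, which is exactly the effect of formally setting $k_4=0$ in the final bound of Theorem \ref{local-th}. Substituting $k_1=k_2=k_3=k$ and $k_4=0$, and then consolidating the curvature terms $\frac{16\pi(k_1+k_2)}{(4-\pi)^2}+\max\{k_2,k_3\}+k_3$ into a constant multiple of $\left(\frac{4+\pi}{4-\pi}\right)^2 k$ via the identity $16\pi+(4-\pi)^2=(4+\pi)^2$, produces the stated estimate on $B_{\rho/2,T}$.

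The only genuine obstacle is the Bianchi-identity observation; once it is in place, everything is bookkeeping. The remaining work is the algebraic collection of constants, carried out exactly as in the proofs of Theorems \ref{cor1} and \ref{local-th} through $\varphi_t(x)\geq(4-\pi)/4$ on $B_{\rho/2,T}$ and $\|\nabla\varphi_t\|\leq\pi/(2\rho)$, together with checking that the $\rho^{-2}$-coefficient $\frac{8\pi^2}{(4-\pi)^2}\left(n+3+\frac{\alpha^2 n}{\alpha-1}\right)$ and the $2/t$ term pass through unchanged after multiplication by $n\alpha^2$. I would close by recording these simplifications explicitly.
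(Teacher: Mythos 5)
Your overall plan---specialize the local results to the Ricci flow and use the contracted second Bianchi identity to dispense with the $k_4$ hypothesis---is in the spirit of the paper, and the Bianchi observation is exactly right. But there is a genuine quantitative gap: plugging $k_1=k_2=k_3=k$, $k_4=0$ into Theorem \ref{cor1} and Theorem \ref{local-th}, as you propose, does \emph{not} yield the stated constants; it yields roughly twice them, so the theorem as stated is not proved. For the Hamilton part, Theorem \ref{cor1} gives the curvature coefficient $\frac{(\pi^2+16)(k_1+k_2)}{(4-\pi)^2}=\frac{2(\pi^2+16)k}{(4-\pi)^2}$, whereas the claim is $\frac{8\pi k}{(4-\pi)^2}$. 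For the Li-Yau part, your proposed consolidation fails arithmetically: with $k_1+k_2=2k$ one has $\frac{16\pi(k_1+k_2)}{(4-\pi)^2}+\max\{k_2,k_3\}+k_3=\frac{32\pi k}{(4-\pi)^2}+2k$, and $32\pi+2(4-\pi)^2\neq(4+\pi)^2$. Your identity $16\pi+(4-\pi)^2=(4+\pi)^2$ is correct, but it closes the computation only if the first coefficient is $16\pi k$ (i.e.\ if $k_1+k_2$ has been replaced by a single $k$) and only one of the two additive $k$'s is absorbed there; the other one, $k_3$, is what turns $\frac{\alpha^2}{\alpha-1}$ into the stated $\frac{\alpha^3}{\alpha-1}$.

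What is missing are the two Ricci-flow-specific improvements that the paper obtains by going back to the \emph{lemmas} rather than the theorems. First, under \eqref{Ricci-flow} one has $\mathcal{R}_t\equiv 0$, so Lemma \ref{local-gradient-estimate} applies with the additive term $k_1+k_2$ taken to be $0$; curvature should then enter only through $\varphi$. Second, and crucially, the Laplacian comparison improves: since ${\rm Ric}_t=-\partial_tg_t\leq k$, the Ricci contributions to $(\Delta-2\partial_t)\rho_t$ combine into the single bound \eqref{eq5}, $(\Delta-2\partial_t)\rho_t\leq\frac{n-1}{\rho_t}+k\rho_t$, i.e.\ with $k$ in place of $k_1+k_2=2k$. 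Feeding \eqref{eq5} into Lemma \ref{local-gradient-estimate} gives $\frac{16}{(4-\pi)^2}\cdot\frac{\pi}{2}k=\frac{8\pi k}{(4-\pi)^2}$, which is the first stated inequality; feeding it into Lemma \ref{Li-Yau-gradient-inequality} with $k_1=k_2=k_3=k$, $k_4=0$ gives $\frac{16\pi k}{(4-\pi)^2}+k=\left(\frac{4+\pi}{4-\pi}\right)^2k$ together with $n\alpha^2\left(k_3+\frac{k_1}{\alpha-1}\right)=\frac{n\alpha^3 k}{\alpha-1}$, which are the stated Li-Yau constants. So the correct route is through Lemmas \ref{local-gradient-estimate} and \ref{Li-Yau-gradient-inequality} combined with \eqref{eq5}, not through the already-specialized Theorems \ref{cor1} and \ref{local-th}.
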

%We know that in \cite{BCP,Liu09,Sun11}, the first two inequalities in Theorem \ref{Ricci-noncompact} have been established. Although there are some difference on the explicit coefficients on the right hand side  of these inequalities, the main order
%about $t$ and $\rho$ is the same. Also, see \cite{BCP} for the further application to  Li-Yau Harnack inequalities.
\begin{proof}
Under the Ricci flow, $\mathcal{R}_t\equiv 0$, then \eqref{ineq-Li-Yau} holds for $k_1+k_2=0$. Moreover,
since ${\rm Ric}_t=-\partial_tg_t\leq k$, we have
\begin{align}\label{eq5}
(\Delta -2\partial_t)\rho_t(x_0,\cdot)(x)\leq \frac{n-1}{\rho_t(x_0,x)}+k\rho_t(x_0,x),
\end{align}
which implies the first inequity. On the other hand,
 by the curvature condition, we see that the inequity \eqref{local-li-Yau} holds for $k_1=k_2=k_3=k$ and $k_4=0$. Thus, according to \eqref{eq5}  and
Lemma \ref{Li-Yau-gradient-inequality},
  we complete the proof.
\end{proof}
\bigskip
\bigskip

\noindent\textbf{Acknowledgements}  \ The author  thanks Professor Feng-Yu Wang for his guidance. This work is supported in part by 985 Project,
 973 Project.


\begin{thebibliography}{99}\setlength{\itemsep}{-0.5pt}
\bibitem{ACT} Arnaudon, M., Coulibaly, K.,  Thalmaier, A., Brownian motion with respect to a metric depending on time: definition, existence and applications to Ricci flow, C. R. Math. Acad. Sci. Paris, 346 (2008), 773--778.
\bibitem{ADT} Arnaudon, M., Driver, B. K., Thalmaier, A.,
Gradient estimates for positive harmonic functions by stochastic analysis,
Stochastic Process. Appl., 117 (2007),  202--220.
\bibitem{AT} Arnaudon, M., Thalmaier, A., Li-Yau type gradient estimates and Harnack inequalities by stochastic analysis, Probabilistic approach to geometry, 29--48, Adv. Stud. Pure Math., 57, Math. Soc. Japan, Tokyo, 2010.
\bibitem{BCP} Bailesteanu, M., Cao, X., Pulemotov, A., Gradient estimates for the heat equation under the Ricci flow, J. Funct. Anal., 258 (2010),  3517--3542.
\bibitem{Cao08} Cao, X., Differential Harnack estimates for backward heat equations with potentials under the Ricci flow, J. Funct. Anal., 255 (2008), 1024--1038.
 \bibitem{CH09}Cao, X., Hamilton, R. S., Differential Harnack estimates for time-dependent heat equations with potentials, Geom. Funct. Anal., 19 (2009), 989--1000.
  \bibitem{cheng}Cheng, L. J., Reflecting diffusion process on time-inhomogeneous
manifolds with boundary, preprint, 2012. (arXiv:1211.3623)
% \bibitem{Guenther02} Guenther, C. M., The fundamental solution on manifolds with time-dependent metrics, J. Geom. Anal., 12 (2002),  425--436.
 \bibitem{Ha95}Hamilton, R. S., The formation of singularities in the Ricci flow, Surveys in differential geometry, Vol. II (Cambridge, MA, 1993), 7--136, Int. Press, Cambridge, MA, 1995.
 %\bibitem{Ni04} Ni, L., Ricci flow and nonnegativity of sectional curvature, Math. Res. Lett., 11 (2004),  883--904.
%\bibitem{Yau}Yau, S. T., Harmonic functions on complete Riemannian manifolds, Comm. Pure Appl. Math., 28 (1975), 201--228.





    \bibitem{Ku} Kuwada, K., Philipowski, R., Non-explosion of diffusion processes on manifolds with time-dependent metric,  Math. Z., { 268} (2011),  979--991.
    \bibitem{Liu09}Liu, S.,
Gradient estimates for solutions of the heat equation under Ricci flow,
Pacific J. Math., 243 (2009), 165--180.
\bibitem{Pe02}Perelman, G, The entropy formula for the Ricci flow and its geometric applications, preprint (2002), arXiv math. DG/0211159.
\bibitem{Shen}{Shen, Y.},  On Ricci deformation of a Riemannian metric on manifold with boundary, Pacific J. Math., { 173} (1996),  203--221.
\bibitem{Sun11} Sun, J., Gradient estimates for positive solutions of the heat equation under geometric flow, Pacific J. Math., 253 (2011),  489--510.
\bibitem{Zhang06}Zhang, Q. S.,
Some gradient estimates for the heat equation on domains and for an equation by Perelman, Int. Math. Res. Not., 2006, Art. ID 92314, 39 pp.

\end{thebibliography}
\end{document}